\newcommand{\lms}{$\left(\mathcal{LMS}\right)$\xspace}
\newcommand{\ms}{$\left(\mathcal{MS}\right)$\xspace}
\newcommand{\T}{\mathrm{T}}
\newcommand{\rk}{\mathrm{rank \ }}
\newcommand{\fp}[2]{\frac{\partial #1}{\partial #2}}
\newcommand{\spn}[1]{\mathrm{span}\left\{#1\right\}}
\newcommand{\ti}[1]{\tilde{#1}}
\newtheorem{definition}{Definition}
\newtheorem{theorem}{Theorem}
\newtheorem{lemma}{Lemma}
\newtheorem{remark}{Remark}
\begin{document}
\title{Mechanical feedback linearization of single-input mechanical control systems}
\author[1]{Marcin Nowicki}
\author[2,3]{ Witold Respondek}
\affil[1]{Poznan University of Technology, Institute of Automatic Control and Robotics, Piotrowo 3a, 61-138~Pozna\'n, Poland}  
\affil[2]{Lodz University of Technology, Institute of Automatic Control, B. Stefanowskiego 18,  90-537 Lodz, Poland }  
\affil[3]{INSA de Rouen Normandie, Laboratoire de Math\'ematiques, 76801 Saint-Etienne-du-Rouvray, France}

\maketitle
\begin{abstract}
	We present a new type of feedback linearization that is tailored for mechanical control systems. We {call} it a mechanical feedback linearization. 
Its basic feature is preservation of the mechanical structure of the system. 
For mechanical systems with a scalar control, we formulate necessary and sufficient conditions that are verifiable using differentiations and algebraic operations only. We illustrate our results {with} several examples.
\end{abstract}

\section{Introduction}
\label{sec:introduction}
An $N$-dimensional control-affine system with a scalar control
\begin{align}
	\label{eq:cs}
	\tag{$\Sigma$}
	\dot{z}=F(z)+G(z)u,
\end{align}
where $z\in Z$, an open subset of $\mathbb{R}^N$, and $u\in \mathbb{R}$, is said to be (locally) \textit{feedback linearizable} {(F-linearizable)} if there exist a (local) diffeomorphism $\Phi: \ Z\rightarrow \mathbb{R}^N$ and an invertible feedback of the form $u=\alpha(z)+\beta(z)\tilde{u}$ such that the control system \eqref{eq:cs}, in  the new coordinates ${\ti{z}=\Phi(z)}$ and with the new control $\tilde{u}$, is a controllable linear system of the form ${\dot{\ti{z}}=A\ti{z}+b\ti{u}}$. A geometric solution {to} the problem of feedback linearization (inspired by \cite{brockett}, and developed independently in \cite{JR} and \cite{HS}) provides powerful techniques for designing a closed-loop control system that have been used in numerous engineering applications. From a theoretical point of view, that result identifies a class of nonlinear systems that can be considered as linear ones in a well-chosen coordinates and with respect to a well-modified control.

In this paper, we state and study the following fundamental question: if a nonlinear control system \eqref{eq:cs} is mechanical and feedback linearizable, are those two structures compatible? That is, can we feedback linearize the system preserving its mechanical structure? For mechanical control systems, it is natural to consider mechanical feedback equivalence (in particular, to a linear form) under {mechanical} transformations (coordinates changes and feedback) that preserve the mechanical structure of the system. In our recent paper \cite{NR}, we showed that even in the simplest underactuated case of 2 degrees of freedom, the structures (linear and mechanical) may not conform trivially. In the present paper, we treat the single-input case in its full generality.

{
There are several motivations for preserving the mechanical structure when feedback linearizing the system. 
First, our formulation of the problem of mechanical linearization preserves configurations and velocities. We reckon that it is essential that new configurations (of the linearized system) are functions of the original configurations only, as well as new velocities are true physical velocities (in contrast to pseudo-velocities). Therefore, we do not lose the physical interpretation of the system. This could be useful, e.g. for mechanical systems with constraints on configurations, which are transformed into linear constraints on configurations.
Second, the configuration trajectories are preserved too, which could be useful in e.g. the motion planning problem (the most natural way to state the problem for mechanical systems is to follow configuration trajectories).
Third, it is worth mentioning that mechanical feedback linearizability  guarantees the linearizing outputs to be functions of configurations only. This may be of constructional importance because one needs only configuration sensors, not those of velocities.
The next argument is the fact that the resultant linear mechanical system allows us to employ dedicated techniques for mechanical systems. An example of such technique is the natural frequency method of tuning a linear feedback.
Finally, when applying mechanical feedback linearization, the physical interpretation of the external action (force, torque, etc.) is preserved but is lost for general feedback linearization. }

This work is a mechanical counterpart of the classical results on feedback linearization of control systems \cite{brockett}, \cite{JR}, \cite{HS}, see also monographs \cite{nij}, \cite{Isidori}. Our intention is to formulate conditions {for mechanical linearization (shortly, MF-linearization)} in a possibly similar manner (e.g. using involutivity of certain distributions).

For a geometric approach to mechanical control systems see \cite{bullo}, \cite{bloch}, \cite{rr}, \cite{rr3}.  For mathematical preliminaries concerning the Lie derivative, the Lie bracket, distributions, etc., see \cite{nij}, \cite{Isidori}. For linearization of mechanical control systems along controlled trajectories see \cite{bullo2}. For mechanical state-space linearization of mechanical control systems see \cite{rr2} and \cite{lh}. Compare also \cite{spong}, for a pioneering work on (partial) feedback linearization of mechanical systems.

Although the state-space of {mechanical control system} is the tangent bundle $\T Q$ of the configuration space $Q$, we formulate our conditions using objects on $Q$ only. The key here is a {geometric} approach to mechanical systems \cite{bullo} and considering {the} Euler-Lagrange equations as the geodesic equation under an influence of external forces.  

{The outline of the paper is as follows}. In Section \ref{sec:PS}, we state the problem. In Section \ref{sec:MF}, we develop further the problem of mechanical feedback linearization and formulate the main result, {separately, for mechanical systems with $n\geq3$ in Theorem \ref{thm:MFC}, and with $n=2$ in Theorem \ref{thm:MFn2}}. In Section \ref{sec:app}, we provide an application of our results to MF-linearization of several mechanical systems.  Section \ref{sec:apx} contains technical results used in proofs that could be of independent interest.

\subsection{Notation}
{Throughout the Einstein summation convention is assumed, i.e. any expression containing a repeated index (upper and lower) implies the summation over that index up to $n$, e.g. $\omega_iX^i=\sum_{i=1}^{n}\omega_iX^i$.\\
$A^T$ \quad transpose of a matrix (of a vector) $A$,\\
$I_n$ \quad $n\times n$ identity matrix,\\
$Q$ \quad configuration manifold, \\ 
$\mathfrak{X}(Q)$ \quad the set of smooth vector fields on a manifold $Q$,\\
$\T_xQ$ \quad tangent space at $x\in Q$, \\
$\T Q=\bigcup_{x\in Q}T_xQ $\quad tangent bundle of $Q$, \\
$x=(x^1,\ldots,x^n)$ \quad a local coordinate system on $Q$, \\ 
$\phi$ \quad a diffeomorphism of $Q$,  and $\Phi$  a diffeomorphism of $\T Q$, \\
$D\phi=\fp{\phi}{x}$ \quad the Jacobian matrix of a diffeomorphism $\phi$, \\
$\fp{\tilde{x}^i}{x^j}:=\fp{\phi_i}{x^j}$\quad the $(i,j)$-element of the Jacobian matrix $D\phi$, \\ 
$\fp{x^j}{\tilde{x}^i}$ \quad the $(j,i)$-element of the inverse of the Jacobian matrix $D\phi$,  \\
$L_X\alpha$ \quad Lie derivative of a function $\alpha$ defined as $L_X\alpha=\fp{\alpha}{x^i}X^i$, \\
$\left[X,Y\right]=\fp{Y}{x}X-\fp{X}{x}Y= ad_XY$ \quad  Lie bracket of vector fields, \\ 
$\fp{ }{x^i}$ \quad the $i$-th unity vector field, and $dx^i$ the $i$-th unity covector field, in a coordinate system $x=(x^1,\ldots,x^n ) $,\\
$	\mathcal{E}^i=\spn{ad_e^jg,\ 0\leq j\leq i }$  distribution on $Q$ spanned by $ad_e^jg$, \\
$\nabla$ \quad covariant derivative, and $\nabla^2$  second covariant derivative,\\
$\Gamma^i_{jk}$ \quad Christoffel symbols of the second kind of $\nabla$, \\
}

\section{Problem statement}
\label{sec:PS}
{Consider an $n$-dimensional configuration space $Q$ (an open subset of $\mathbb{R}^n$ or, in general, an $n$-dimensional manifold) equipped with a symmetric affine connection $\nabla$.  The operator of the affine connection $\nabla$ allows to define intrinsically the acceleration as the covariant derivative $\nabla_{\dot{x}(t)}\dot{x}(t)$, see e.g. \cite{bullo,bloch,lee}. The covariant derivative
	$
	\nabla: \mathfrak{X}(Q) \times \mathfrak{X}(Q) \rightarrow \mathfrak{X}(Q)
	$
 of an arbitrary vector field $Y=Y^i\fp{}{x^i}$ with respect to $X=X^i\fp{}{x^i}$ in coordinates reads
	\begin{align}
		\label{eq:CD1}
		\nabla_XY=\left(\fp{Y^i}{x^j}X^j+\Gamma^i_{jk}X^jY^k \right) \fp{ }{x^i}.
\end{align}}
A mechanical control system \ms is a 4-tuple $(Q,\nabla,g,e)$, where  $g$ and $e$ are, respectively, controlled and uncontrolled vector fields on $Q$.
A curve $x(t):I\rightarrow Q$, $I\subset \mathbb{R}$, is a trajectory of \ms if it satisfies the following equation
\begin{align}
	\nabla_{\dot{x}(t)}\dot{x}(t)=e\left(x(t) \right) +g\left( x(t)\right) u,
	\label{eq:ms0}
\end{align}
which can be viewed as an equation that balances accelerations of the system, where the left{-}hand side represents geometric accelerations (i.e. accelerations caused by the geometry of the system) and the right{-}hand side represents accelerations caused by external actions on the system (controlled or not). 
{Notice that \eqref{eq:ms0} is a second-order differential equation on $Q$ (indeed, using \eqref{eq:CD1} we conclude that $\nabla_{\dot{x}}\dot{x}$ depends on $\ddot{x}$, see \cite{bullo} for details) and can be rewritten as a system of} first-order differential equations on $\T Q$, which we also call a mechanical control system \ms:
\begin{align}
	\begin{split}
		\dot{x}^i&=y^i\nonumber\\
		\dot{y}^i&=-\Gamma^i_{jk}(x)y^j y^k+e^i(x)+g^i(x) u,
	\end{split}
	\label{eq:ms}
	\tag{$\mathcal{MS}$}
\end{align}
for $1\leq i \leq n$, where $\left( x,y\right) =\left(x^1,\ldots,x^n,y^1,\ldots,y^n \right) $ are local coordinates on the tangent bundle $\T Q$ of the configuration manifold $Q$, and $\Gamma^i_{jk}(x)$ are Christoffel symbols of the affine connection $\nabla$ that correspond to the Coriolis and centrifugal forces. The vector fields $e(x)=( e^1(x),\ldots,e^n(x))^T$ and $g(x)=(g^1(x),\ldots,g^n(x))^T$ correspond to, respectively, uncontrolled and controlled actions on the system. Throughout all objects are assumed to be smooth and the word smooth means $C^{\infty}$-smooth.

{Our obvious inspirations are Lagrangian mechanical control systems without dissipative forces. For the correspondence between \eqref{eq:ms} and the Lagrangian equations of dynamics see \cite{bullo}, \cite{bloch}, \cite{rr} and our recent papers \cite{lh}, \cite{lms}. However, we will consider throughout a more general class of mechanical control systems allowing for any symmetric (not necessarily a metric) connection and any (not necessarily potential) vector field $e(x)$.}

 Consider the group of mechanical feedback transformations {$G_{MF}$} generated by the following transformations:
\begin{enumerate}[(i)]
	\item changes of coordinates in $\T Q$ given by $\Phi:\  \T Q \rightarrow  \T \tilde{Q}$
	\begin{align}
		\label{eq:diff}
		\begin{split}
			\left(x,y \right)  &\mapsto \left( \ti{x},\ti{y}\right) =\Phi(x,y)=\left( \phi(x),\fp{\phi}{x}(x)y \right), 
		\end{split}
	\end{align}
called a mechanical diffeomorphism,	where $\phi:Q\rightarrow \ti{Q}$ is a diffeomorphism and $\fp{\phi}{x}$ its Jacobian matrix,
	\item mechanical feedback transformations, denoted $\left(\alpha,\beta,\gamma \right) $, of the form
	\begin{align}
		\label{eq:feed}
		u=\gamma_{jk}(x)y^jy^k+\alpha(x)+\beta(x)\tilde{u},
	\end{align}
	where $\gamma_{jk},\alpha,\beta$ are smooth functions on $Q$ satisfying \\$\gamma_{jk}=\gamma_{kj}$, $\beta(\cdot)\neq 0$. {The matrix $\gamma=(\gamma_{jk})$ represents a $(0,2)-$tensor field.}
\end{enumerate}
Even if the diffeomorphism $\phi$ is possibly local on $Q$, {the action of $\fp{\phi}{x}(x)$} is always global on fibers $T_xQ$.
\begin{definition}
	\label{def:lin}
The system \eqref{eq:ms} is {MF-linearizable} if there exist mechanical feedback transformations $(\Phi,\alpha,\beta,\gamma)\in {G_{MF}}$ bringing \eqref{eq:ms} into a linear controllable mechanical system of the form
\begin{align}
	\begin{split}
		\dot{\ti{x}}^i&=\ti{y}^i\\
		\dot{\ti{y}}^i&=E^i_j\ti{x}^j+b^i\ti{u},
	\end{split}
	\label{eq:lms}
	\tag{$\mathcal{LMS}$}
\end{align}
where $(\ti{x},\ti{y})$ are coordinates on $\T\mathbb{R}^n=\mathbb{R}^n\times\mathbb{R}^n$, the matrix $E=(E^i_j) $ is  an $n\times n$ real-valued matrix, the vector field $b=b^i \fp{}{\ti{x}^i}$ is constant, and the pair $\left(E,b \right) $ is controllable (see \cite{Hughes}). 
\end{definition}

 Represent \eqref{eq:ms} as $\dot{z}=F(z)+G(z)u,$ where $z=(x,y)\in \T Q$, $F=y^i \frac{\partial}{\partial x^i}+\left(-\Gamma^i_{jk}(x)y^j y^k+e^i(x)\right)\frac{\partial}{\partial y^i}$, and $G=g^i(x)\frac{\partial}{\partial y^i}$. 
 {The problem that we formulate and solve in the paper is whether \ms is MF-linearizable? That is, {do} there exist $\Phi=(\ti{x},\ti{y})=(\phi, \fp{\phi}{x} y)$ and $(\alpha,\beta,\gamma)$ such that } 
 	\begin{align*}
 \quad 	\fp{\Phi}{z}(z)\left( F+G(y^T\gamma y+\alpha)\right)(z) &=\left( \begin{matrix}
 		\ti{y}\\ E\tilde{x}
 	\end{matrix}\right), \\ \fp{\Phi}{z}(z)\left(  G\beta\right)(z)&=\left(\begin{matrix}
 0\\b
 \end{matrix}\right)?
 \end{align*}

{Note that} MF-linearizability is stronger than the classical feedback linearizability since, for the latter, $\Phi: \T Q \rightarrow \mathbb{R}^{2n}$ can be any diffeomorphism (need not be of mechanical form \eqref{eq:diff}) and $y^T\gamma(x) y+\alpha(x)$ can be replaced by any function $\alpha(x,y)$ on $\T Q$ and $\beta(x)$ by any invertible function $\beta(x,y)$ on $\T Q$. 

{If we neglect the mechanical structure of $\dot{z}=F(z)+G(z)u$, and consider it as a general control system, we can ask if the system is F-linearizable. The well-known answer{\cite{JR,HS}} asserts that{, locally,} this is the case  if and only if the distributions $\mathcal{D}^i=\spn{ad_F^jG, 0\leq j\leq i }$ are involutive and of constant rank for $i=0,...,2n-1$ {and $\mathcal{D}^{2n-1}=\T Q$}. The natural question arises whether, for F-linearizable \ms, the general feedback transformations $\left( \Phi(z),\alpha(z),\beta(z)\right)$ are mechanical (i.e. of the form \eqref{eq:diff} and \eqref{eq:feed}) or whether they can be replaced by mechanical ones.}

\textbf{Example 1:}
{
Consider the mechanical system
\begin{align}
	\label{eq:ex1}
	\begin{split}
		\dot{{x}}^{1}&={y}^{1}\\
		\dot{{x}}^{2}&={y}^{2}
	\end{split}
	\quad
	\begin{split}
		\dot{{y}}^{1}&=-x^2(y^1)^2+{x}^2\\
		\dot{{y}}^{2}&=u,
	\end{split}
\end{align}
on $\mathbb{R}^4$. This system is locally F-linearizable. Indeed, the local diffeomorphism $\ti{z}=\Phi(z)$, where $z=(x^1,x^2,y^1,y^2)$, $\ti z=(\ti x^1,\ti x^2,\ti y^1,\ti y^2)$, given by
\begin{align*}
	\begin{split}
		\ti{x}^1&={x}^{1}\\
		\ti{{x}}^{2}&=x^2-x^2({y}^{1})^2
	\end{split}
	\quad
	\begin{split}
		\ti{{y}}^{1}&=y^1\\
		\ti{{y}}^{2}&=\left((y^1)^2-1 \right) \left(2(x^2)^2y^1-y^2 \right),
	\end{split}
\end{align*}
together with the feedback $u=2(x^2)^3+6 (x^2-(x^2)^2)(y^1)^2 + \frac{\ti{u}}{(y^1)^2-1} $, render the original system linear and controllable
\begin{align*}
	\dot{\ti{x}}^{1}=\ti{y}^{1} \quad \dot{\ti{x}}^{2}=\ti{y}^{2} \quad \dot{\ti{y}}^{1}=\ti{x}^2 \quad \dot{\ti{y}}^{2}=\ti{u}.
\end{align*}
Therefore, the system is F-linearizable. Note, however, that neither the change of coordinates nor the feedback is mechanical ($\ti x^2$ depends on velocities, and the function $\beta$ depends on velocities as well) so the mechanical structure is not preserved. Our question is whether this system can be linearized by other transformations that preserve the mechanical structure, i.e. can it be MF-linearized?
}

The group of mechanical transformations ${G_{MF}}=\left\lbrace \left(\Phi,\alpha,\beta,\gamma \right) \right\rbrace $ preserves trajectories, that is, maps the trajectories of \ms into those of its MF-equivalent system $(\widetilde{\mathcal{MS}})$. Indeed, if $z\left( t,z_0,u(t)\right) $ is a trajectory of \eqref{eq:ms} (passing through $z_0=(x_0,y_0)$ and corresponding to a control $u(t)$), then $\tilde{z}\left( t,\tilde{z}_0,\tilde{u}(t)\right) =\Phi\left(z\left( t,z_0,u(t)\right)  \right)$ is a trajectory of $(\widetilde{\mathcal{MS}})$ passing through $\ti{z}_0~=~\Phi(z_0)=(\phi(x_0), \fp{\phi}{x}(x_0)y_0)$ and corresponding to $\ti{u}(t)$, where ${u(t)=y(t)^T\gamma\left(x(t)\right)y(t)+\alpha\left( x(t)\right) + \beta \left(x(t) \right) \ti{u}(t)} $.

\noindent Moreover, via $\phi:Q\rightarrow \ti{Q}$, it establishes a correspondence between configuration trajectories in $Q$ and $\ti{Q}$, i.e. $\ti{x}\left(t,\ti{z}_0,\ti{u}(t)\right)=\phi\left(x(t,z_0,u(t)) \right)$,
making the following diagram commutative (notice, however, that $\pi\left(z(t,z_0,u) \right)=x(t,z_0,u) $ depends on $z_0=(x_0,y_0)$, {i.e.} an initial configuration $x_0$ \textit{and} initial velocity $y_0$):
\[
\begin{tikzcd}[row sep = 1cm,column sep=3cm]
	z(t,z_0,u) \arrow{r}{ \left( \Phi,\alpha,\beta,\gamma\right)  }\arrow{d}{\pi} & \ti{z}(t,\ti{z}_0,\ti{u}) \arrow{d}{\pi} \\
	x(t,z_0,u) \arrow{r}{ \left( \phi,\alpha,\beta,\gamma\right)  }  & \ti{x}(t,\ti{z}_0,\ti{u}) 
\end{tikzcd}
\]
where $\pi:TQ\rightarrow Q$, $\pi(z)=\pi(x,y)=x$, is the canonical projection which assigns to the pair $(x,y)$ the point $x$ at which the velocity $y$ is attached.

\section{Mechanical feedback linearization}
\label{sec:MF}
Our main result uses two basic ingredients: the covariant derivative of the connection $\nabla$, see \eqref{eq:CD1}, and the involutivity of suitable distributions. We will also need the second covariant derivative of a vector field $Z$ in the directions $(X,Y)$, which is a mapping 
	\begin{align}
		\label{eq:cd2}
		\begin{split}
				\nabla^2: \mathfrak{X}(Q)\times\mathfrak{X}(Q)\times\mathfrak{X}(Q)\rightarrow\mathfrak{X}(Q)\\
			\nabla^2_{X,Y}Z= \nabla_X\nabla_YZ - \nabla_{\nabla_XY}Z.
		\end{split}
	\end{align}
For properties of the second covariant derivative see Lemma \ref{prop:cov2} in Appendix.

In order to formulate the result, we associate with \ms the following sequence of nested distributions $\mathcal{E}^0 \subset \mathcal{E}^1 \subset \mathcal{E}^2 \subset \ldots \subset \mathcal{E}^i \subset \ldots \subset \T Q$, where
\begin{align*}
	\mathcal{E}^0=\spn{g}, \quad
	\mathcal{E}^i=\spn{ad_e^jg, 0\leq j\leq i }.
\end{align*}
\begin{remark}
	\label{rem:inv}
	To analyze the behavior of the distributions $\mathcal{E}^i$ under mechanical feedback transformations $(\alpha,\beta,\gamma)$ notice, first, that $\mathcal{E}^i$ are invariant under $\gamma$ since $\gamma$ does not act on them. If the distributions $\mathcal{E}^i$ are involutive, then they are invariant under feedback transformations of the form $(\alpha,\beta,0)$, i.e. for $\gamma=0$ they remain unchanged if we replaced $e$ and $g$ by, respectively, $e+g\alpha$ and $\beta g$, cf. \cite{nij}, \cite{Isidori}.
\end{remark}

Now, we formulate our main result {for} MF-linearization. { First, we state a theorem for \ms with $n\geq3$ degrees of freedom. The remaining case of $n=2$ degrees of freedom is treated in Theorem~\ref{thm:MFn2}. For an explanation of that distinction, see the comment before Theorem \ref{thm:MFn2} and Remark \ref{rem:2d} for a comparison of both results.}

{By a local MF-linearization around $x_0\in Q$ we mean that it holds on $\bigcup_{x\in \mathcal{O}}T_xQ$, where $\mathcal{O}$ is a neighborhood of $x_0$; recall that all transformations are global on tangent spaces $T_xQ$.   }

\begin{theorem}
	\label{thm:MFC}
	Assume $n \geq 3$. A mechanical control system $(\mathcal{MS})$ is, locally around $x_0$, MF-linearizable to a controllable $(\mathcal{LMS})$ if and only if
	\begin{enumerate}[(MF1)]
		\item $\rk \mathcal{E}^{n-1} =n$,
		\item $\mathcal{E}^{i}$ is involutive and of constant rank, for $0\leq i \leq n-2$,
		\item $\nabla_{ad_e^ig}\, g\in \mathcal{E}^0$\quad for $0\leq i\leq n-1$,
		\item $\nabla^2_{ad_e^kg,ad_e^jg} \, e\in \mathcal{E}^1$\quad for $0\leq k,j\leq n-1$,
	\end{enumerate}
\end{theorem}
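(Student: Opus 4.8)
I would base both directions on one fact: the conjunction (MF1)--(MF4) is invariant under the group $G_{MF}$. Given this, necessity is immediate --- the conditions hold for every controllable $(\mathcal{LMS})$ and transfer back along the inverse transformation --- and sufficiency reduces to producing, under (MF1)--(MF4), an explicit $(\Phi,\alpha,\beta,\gamma)$ that carries $(\mathcal{MS})$ to a controllable $(\mathcal{LMS})$. Throughout I use that a feedback $(\alpha,\beta,\gamma)$ replaces $(\nabla,e,g)$ by $(\tilde\nabla,\tilde e,\tilde g)$ with $\tilde\nabla_XY=\nabla_XY-g\,\gamma(X,Y)$, $\tilde e=e+g\alpha$, $\tilde g=\beta g$, and a mechanical diffeomorphism pulls all these objects back by $\phi$.

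\textbf{Invariance and necessity.} For $(\mathcal{LMS})$ the connection is flat, $e=E\tilde x$ is linear and $g=b$ is constant, so $ad_e^jg=(-1)^jE^jb$: then (MF1) is Kalman controllability of $(E,b)$, (MF2) holds because the $\mathcal{E}^i$ are spanned by constant vector fields, while $\nabla_{(\cdot)}b=0$ and $\nabla^2_{(\cdot),(\cdot)}(E\tilde x)=0$ give (MF3)--(MF4). Invariance under $\phi$ is automatic since $\mathcal{E}^i$, $\nabla_{ad_e^ig}g$, $\nabla^2_{\cdot,\cdot}e$ are intrinsic; invariance under $(\alpha,\beta,\gamma)$ uses Remark~\ref{rem:inv} for (MF1)--(MF2), and for (MF3)--(MF4) a direct transformation computation, exploiting that the $\gamma$-correction of $\nabla$ lies along $\mathcal{E}^0$, that $ad_e^i(\beta g)\equiv(\text{scalar})\,ad_e^ig\pmod{\mathcal{E}^{i-1}}$, and the Leibniz rules for $\nabla^2$ from Lemma~\ref{prop:cov2}. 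Hence any MF-linearizable $(\mathcal{MS})$ satisfies (MF1)--(MF4).

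\textbf{Construction for sufficiency.} Assume (MF1)--(MF4). From (MF1)--(MF2), $\rk\mathcal{E}^i=i+1$ and $\mathcal{E}^i$ is involutive of constant rank for $i\le n-2$, so there is $h$ with $\ker dh=\mathcal{E}^{n-2}$; then (standard relative-degree facts) $L_gL_e^kh=0$ for $0\le k\le n-2$, $L_gL_e^{n-1}h\neq0$, $d(L_e^kh)$ annihilates $\mathcal{E}^{n-2-k}$ for $0\le k\le n-2$, and $\tilde x^i:=L_e^{i-1}h$ ($1\le i\le n$) is a coordinate system. Put $\phi=(\tilde x^1,\dots,\tilde x^n)$ with mechanical lift $\Phi$. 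Differentiating $\tilde y^i=d(L_e^{i-1}h)(y)$ along $(\mathcal{MS})$ with the feedback inserted gives $\dot{\tilde x}^i=\tilde y^i$ and $\dot{\tilde y}^i=\tilde\nabla^2_{y,y}(L_e^{i-1}h)+L_{\tilde e}(L_e^{i-1}h)+\big(L_{\tilde g}(L_e^{i-1}h)\big)\tilde u$, where $\tilde\nabla^2$ is the Hessian of $\tilde\nabla$. For $i\le n-1$ the last term vanishes ($L_gL_e^{i-1}h=0$) and $L_{\tilde e}(L_e^{i-1}h)=L_e^ih=\tilde x^{i+1}$; the choices $\beta=(L_gL_e^{n-1}h)^{-1}$, $\alpha=-L_e^nh/(L_gL_e^{n-1}h)$, $\gamma=-\nabla^2(L_e^{n-1}h)/(L_gL_e^{n-1}h)$ (with $\gamma$ symmetric because $\nabla$ is torsion-free) turn the $i=n$ row into $\dot{\tilde y}^n=\tilde u$. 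Thus $(\mathcal{MS})$ becomes $(\mathcal{LMS})$ with the controllable companion pair ($E$ the nilpotent shift with zero last row, $b=(0,\dots,0,1)^T$), \emph{provided} $\nabla^2(L_e^kh)=0$ for $0\le k\le n-2$ --- note that for such $k$ one has $\tilde\nabla^2(L_e^kh)=\nabla^2(L_e^kh)$, since $L_gL_e^kh=0$.

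\textbf{Closing the chain, and the main obstacle.} By (MF1), (MF3) and tensoriality of $X\mapsto\nabla_Xg$ one gets $\nabla_Xg\in\mathcal{E}^0\subset\ker dh$ for all $X$. Writing $\nabla^2f$ for the Hessian $(0,2)$-tensor of a function $f$, one has the Leibniz identity
\[
\nabla^2_{X,Y}(L_ef)=(\nabla_X(\nabla^2f))(Y,e)+(\nabla^2f)(Y,\nabla_Xe)+(\nabla^2f)(X,\nabla_Ye)+\langle df,\nabla^2_{X,Y}e\rangle ,
\]
and $\nabla^2_{X,Y}e\in\mathcal{E}^1$ (from (MF1), (MF4), tensoriality), so $\nabla^2(L_e^kh)=0$ implies $\nabla^2(L_e^{k+1}h)=0$ whenever $d(L_e^kh)$ annihilates $\mathcal{E}^1$, i.e. whenever $\mathcal{E}^1\subset\mathcal{E}^{n-2-k}$ --- valid for $0\le k\le n-3$. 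Hence $\nabla^2(L_e^kh)=0$ for all $0\le k\le n-2$ once the base case $k=0$ holds, and this is exactly where $n\ge3$ is used ($\mathcal{E}^1\subset\mathcal{E}^{n-2}$ fails for $n=2$), whence the separate Theorem~\ref{thm:MFn2}. The hard part is therefore the base case $\nabla^2h=0$ --- a linearizing output affine for $\nabla$ --- which I would obtain by showing (a) $\mathcal{E}^{n-2}$ is totally geodesic, $\nabla_XZ\in\mathcal{E}^{n-2}$ for all $X$ and $Z\in\mathcal{E}^{n-2}$; and (b) an involutive, totally geodesic corank-one distribution admits a $\nabla$-affine first integral (its leaves are totally geodesic, the one-dimensional local leaf space inherits a connection, and the pullback of an affine coordinate on it is the required $h$). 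Step (a) is the delicate one: I would induct along the flag, expanding $ad_e^{j+1}g=\nabla_e(ad_e^jg)-\nabla_{ad_e^jg}e$ and, for the leading coefficient, differentiating $ad_e^{j+1}g=[e,ad_e^jg]$, with (MF3) controlling $\nabla_{(\cdot)}g$, (MF4) plus tensoriality controlling $\nabla^2_{\cdot,\cdot}e$, and involutivity of the $\mathcal{E}^i$ keeping all remainders inside the flag. I expect this step --- and the bookkeeping of the covariant-derivative identities it needs, which is what the appendix lemmas (Lemma~\ref{prop:cov2} and its companions) supply --- to be the technical heart of the proof, and the point at which the second-order, mechanical structure is genuinely used.
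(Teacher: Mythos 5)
Your necessity argument matches the paper's. For sufficiency, your reduction to the single statement ``$\nabla^2(L_e^kh)=0$ for $0\le k\le n-2$'' (the $i=n$ row being absorbed by $\gamma$), together with the Leibniz-identity induction pushing this from $k$ to $k+1$ via (MF4), tensoriality and the annihilation properties of $d(L_e^kh)$, is correct and is a genuinely different, coordinate-free reorganization of what the paper does by explicit Christoffel-symbol bookkeeping in the normal form \eqref{eq:nf} (Lemma \ref{lem:formula} and the triangular structure \eqref{eq:nft2}). That part of your plan I find convincing.

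The gap is in your base case, specifically step (b): \emph{``an involutive, totally geodesic corank-one distribution admits a $\nabla$-affine first integral''} is false as stated. Your hypothesis ($\nabla_XZ\in D$ for all $X$ and all $Z\in D$, i.e.\ $D$ parallel) only yields $\nabla_X\omega=\mu(X)\,\omega$ for the annihilating one-form $\omega$ and some one-form $\mu$; to rescale $\omega$ into an exact parallel one-form --- equivalently, for the connection to descend to the one-dimensional leaf space, which your parenthetical argument silently assumes --- one needs in addition that $\mu$ be closed modulo $\omega$. In adapted coordinates with $D=\ker dx^n$, parallelism forces $\Gamma^n_{kj}=0$ except possibly $\Gamma^n_{nn}=\lambda(x)$, and then $\nabla^2\psi(x^n)=\left(\psi''-\lambda\,\psi'\right)dx^n\otimes dx^n$ can be made to vanish (with $\psi'\neq0$) only if $\lambda$ depends on $x^n$ alone. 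A counterexample to (b) on $\mathbb{R}^2$: take $\Gamma^2_{22}=x^1$ and all other symbols zero; then $\ker dx^2$ is parallel and involutive but admits no affine first integral. The missing condition $\fp{\lambda}{x^k}=0$ for $1\le k\le n-1$ does \emph{not} follow from your (a)+(b); in the paper it is extracted from one more application of (MF4) in the transverse direction $ad_e^{n-1}g$ (the computation \eqref{eq:kn} with $1\le k\le n-1$). So your architecture can be completed, but the base case needs this extra use of (MF4) rather than the purely geometric lemma you invoke. (Step (a) --- parallelism of $\mathcal{E}^{n-2}$ --- is true, but it is essentially equivalent to the triangular structure \eqref{eq:nft2}, and you only sketch the flag induction that proves it; that is where the bulk of the paper's computation lives.)
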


\begin{remark}
	{
	Notice that (MF1)-(MF2) are the classical conditions (see \cite{JR,HS,Isidori,nij}) that assure F-linearization of the system $\dot{x}=e(x)+g(x)u$ on $Q$ via $\ti{x}=\phi(x)$ and $u=\alpha(x)+\beta(x)\ti{u}$. The remaining two, (MF3)-(MF4), can be interpreted as compatibility conditions that guarantee vanishing the Christoffel symbols $\Gamma^i_{jk}$ in the linearizing coordinates $\ti{x}=\phi(x)$, except for those that can be compensated by feedback 	$u=\gamma_{jk}(x)y^jy^k+\tilde{u}$. }
\end{remark}

\begin{proof}
	In the proof we will use two Lemmata \ref{prop:cov2} and \ref{lem:formula}, given in Appendix, that are of independent interest.
	
	\textit{Necessity}.	For \eqref{eq:lms}, we have $\Gamma^i_{jk}=0$, $e=Ex$ and $g=b$. It follows that $ad_e^ig=(-1)^iE^ib$ and therefore, using  the definitions of $\nabla$, given by \eqref{eq:CD1}, and of  $\nabla^2$, given by \eqref{eq:cd2}, we calculate 
	\begin{align}
		\label{eq:F2a}
		\nabla_{ad^i_eg}ad^j_eg=0,\qquad
		\nabla^2_{ad^k_eg,ad^j_eg}e=0,
	\end{align}
which implies that (MF1)-(MF4) hold for \lms (in particular, (MF1) holds because \lms is assumed controllable). To prove necessity of (MF1)-(MF4), we will show that they are MF-invariant.
	All conditions (MF1)-(MF4) are expressed in a geometrical way, therefore they are invariant under diffeomorphisms. The conditions
	(MF1) and (MF2) are mechanical feedback invariant, see Remark \ref{rem:inv}. It remains to show that (MF3) and (MF4) are invariant under the  mechanical feedback $u=\gamma_{jk}(x)y^jy^k+\alpha(x)+\beta(x)\tilde{u}$.
	For the closed-loop system, {denoted by "$\sim$"}, the Christoffel symbols $\ti{\Gamma}^i_{jk}$ of $\ti{\nabla}$, $\ti{e}$, and $\ti{g}$ are, respectively, given by
	\begin{align}
		\label{eq:cl1}
		\tilde{\Gamma}^i_{jk}=\Gamma^i_{jk}- g^i\gamma_{jk},\qquad
		\tilde{e}=e+g\alpha,\qquad
		\tilde{g}=g\beta.
	\end{align}
	For any $X,Y\in \mathfrak{X}(Q)$, we have $\tilde{\nabla}_{X}Y=\nabla_{X}Y- \gamma(X,Y)g=\nabla_{X}Y \mod \mathcal{E}^0$, where $\gamma(X,Y)=\gamma_{jk}X^jY^k\in C^{\infty}(Q)$,
	therefore		
	\begin{align*}
		\tilde{\nabla}_{ad_{\tilde{e}}^i\tilde{g}}\tilde{g}=\nabla_{ad^i_{\tilde{e}}\tilde{g}}\tilde{g}-\gamma(ad_{\tilde{e}}^i\tilde{g},\tilde{g})g=\nabla_{ad^i_{\tilde{e}}\tilde{g}}\tilde{g} \mod \mathcal{E}^0.
	\end{align*}
	
	By $\nabla_X\tilde{g} =\nabla_X\left( g \beta\right) = \nabla_Xg +  \left(L_X\beta \right)g$, 
	it follows that instead of calculating $\nabla_{ad_{\tilde{e}}^i\tilde{g}}\tilde{g}$ it is enough to calculate $\nabla_{ad_{\tilde{e}}^i\tilde{g}}g$, since the second term $\left( L_X\beta \right)g\in \mathcal{E}^0$. For i=0, we have
	$	\nabla_{\tilde{g}}g=\nabla_{\left( g \beta\right) }g=\beta \nabla_{g}g\in \mathcal{E}^0$.
	It is easy to show that for any ${1}\leq j\leq n-1$, we have
	\begin{align}
		\label{eq:Ff}
		ad_{\tilde{e}}^j\tilde{g}=\beta ad_e^jg +d^{j-1},
	\end{align}
	where $d^{j-1}\in \mathcal{E}^{j-1}$.	 Assume $\nabla_{ad_{\tilde{e}}^l\tilde{g}}g\in \mathcal{E}^0$, for $0\leq l \leq i-1 $. Then, by formula \eqref{eq:Ff}, $\nabla_{ad_{\tilde{e}}^i\tilde{g}}g=\beta \nabla_{ad^i_eg}g+\nabla_{d^{i-1}}g\in \mathcal{E}^0$,
	because the first term is in $\mathcal{E}^0$ by (MF3) and the second by the induction assumption.	We have thus proved necessity of (MF3).
	
	To show necessity of (MF4), using Lemma \ref{prop:cov2}, calculate
	\begin{align}
		\label{eq:mc4n}
		\begin{split}
			&\tilde{\nabla}^2_{X,Y}Z=\tilde{\nabla}_X\tilde{\nabla}_YZ-\tilde{\nabla}_{\tilde{\nabla}_XY}Z\\&=\tilde{\nabla}_X\left(\nabla_YZ- \gamma(Y,Z) g\right)-\tilde{\nabla}_{\left(\nabla_{X}Y- \gamma(X,Y) g \right) }Z
		\\&	=\nabla^2_{X,Y}Z-\gamma(Y,Z) \nabla_Xg + \gamma(X,Y)\nabla_{g}Z \mod \mathcal{E}^0.
		\end{split}
	\end{align}
	By the above formula, we get
	\begin{align*}
		\begin{split}
			\ti{\nabla}^2_{ad^k_{\ti{e}}\ti{g},ad^j_{\ti{e}}\ti{g}}\ti{e}=&\nabla^2_{ad^k_{\ti{e}}\ti{g},ad^j_{\ti{e}}\ti{g}}\ti{e}-\gamma(ad^j_{\ti{e}}\ti{g},\ti{e}) \nabla_{ad^k_{\ti{e}}\ti{g}}g \\&+ \gamma(ad^k_{\ti{e}}\ti{g},ad^j_{\ti{e}}\ti{g})\nabla_{g}\ti{e} \mod \mathcal{E}^0.
		\end{split}
	\end{align*}
	The second term, on the right hand side, is in $\mathcal{E}^0$ (by (MF3) and its invariance), while the third term is a function multiplying 
	\begin{align*}
		\nabla_{g}\tilde{e}&=\nabla_{g}\left(e+ g\alpha \right) =
		    \nabla_{g}e +    \alpha\nabla_{g}g + L_{g}\alpha \ g   \in \mathcal{E}^1, 
	\end{align*}
	since for \lms we have $ \nabla_{g}e=-ad_eg=-Eb\in\mathcal{E}^1$.
	
	The first term $\nabla^2_{ad^k_{\ti{e}}\ti{g},ad^j_{\ti{e}}\ti{g}}\ti{e}$ is, by \eqref{eq:Ff} and Lemma \ref{prop:cov2}(i), a linear combination with smooth coefficients of $\nabla^2_{ad_e^ig,ad_e^lg}\ti{e}$, with $0\leq i \leq k$ and  $0\leq l \leq j$. Thus we calculate	
	$
	\nabla^2_{ad^i_eg,ad^l_eg}\tilde{e}= \nabla^2_{ad^i_eg,ad^l_eg}e +  \nabla^2_{ad^i_eg,ad^l_eg}( g\alpha).
	$	
	The first term vanishes since \eqref{eq:F2a} holds for \lms. We calculate the second term using Lemma \ref{prop:cov2}(iii), and we have $	
	\nabla^2_{ad^i_eg,ad^l_eg}( g\alpha)= \alpha \nabla^2_{ad^i_eg,ad^l_eg}g + L_{ad^i_eg}\alpha \nabla_{ad^l_eg}g + L_{ad^l_eg}\alpha \nabla_{ad^i_eg}g + (\nabla^2_{ad^i_eg,ad^l_eg}\alpha) g \in \mathcal{E}^0$ because the first three terms vanish, due to \eqref{eq:F2a}, and  the last one is in $\mathcal{E}^0$. Summarizing the above calculations, we conclude that $
	\ti{\nabla}^2_{ad^k_{\ti{e}}\ti{g},ad^j_{\ti{e}}\ti{g}}\ti{e} \in \mathcal{E}^1=\tilde{\mathcal{E}}^1$, 
	which proves necessity of (MF4).
	
	\textit{Sufficiency}. {We will transform the system \ms, satisfying (MF1)-(MF4), into \lms in two steps. In the first step, we will normalize the vector fields $e$ and $g$ and show that condition (MF4) implies zeroing some of the Christoffel symbols $\Gamma^i_{jk}$, which exhibit a triangular form in the normalizing coordinates. In the second step, we compensate the remaining Christoffel symbols.}
	
	By conditions (MF1)-(MF2), there exists a function $h$ satisfying $L_{ad_e^jg}h=0$, for $0\leq j \leq n-2$, and $L_{ad_e^{n-1}g}h\neq0$, and thus $(\ti{x},\ti{y})=(\phi(x),\fp{\phi}{x}(x)y)$ is a local mechanical diffeomorphism, where $\phi(x)=(L_e^{n-1}h,\ldots,L_eh, h)^T$ that can be completed by a feedback transformation $(\alpha,\beta{,0})$ that map, respectively, $\beta g$ into $\tilde{g}=(1,0,\ldots,0)^T$, $e+g\alpha$ into $\tilde{e}=(0,\ti{x}^1,\ldots,\ti{x}^{n-1})^T$, {and $\Gamma^i_{jk}$ into $\ti \Gamma^i_{jk}$}, see {the} classical results of feedback linearization \cite{JR}, \cite{nij}, \cite{Isidori}. Then, $(\Phi,\alpha,\beta,\gamma)\in {G_{MF}}$, where $(\ti{x},\ti{y})=\Phi(x,y)=\left(\phi(x), \fp{\phi}{x}(x)y \right) $ with $\phi,\alpha,\beta$ just defined and $\gamma_{jk}=\tilde{\Gamma}^1_{jk}(\ti{x})$, brings \eqref{eq:ms} into (we drop "tildas" for readability)
	\begin{align}
		\label{eq:nf}
		\begin{split}
			\dot{x}^{1}&=y^{1}\\
			\dot{x}^{i}&=y^{i}
		\end{split}
		\begin{split}
			\dot{y}^{1}&=u\\
			\dot{y}^{i}&=-\Gamma^{i}_{jk}y^jy^k +x^{i-1}, \quad 2 \leq i \leq n,
		\end{split}		
	\end{align}
{to which Lemma \ref{lem:formula} applies.}

{We will show that the Christoffel symbols $\Gamma^i_{jk}$ of \eqref{eq:nf} satisfy 
	\begin{align}
		\label{eq:nft2}
		\begin{split}
			&{\Gamma}^i_{kj}=0\qquad \text{for $1\leq k \leq n-1$, $1\leq j \leq i \leq n$},\\
			&{\Gamma}^i_{nj} =  \begin{cases}
				\;	0  &  \qquad\text{for } 1\leq j < i \leq n\\
				\;	\lambda(x^n) &  \qquad\text{for } 2\leq j=i \leq n.
			\end{cases}
		\end{split}
	\end{align}}

	For {system \eqref{eq:nf}}, we have $ad_e^{k-1}g=(-1)^{k-1}\fp{}{x^k}$ and, in particular, $g=\fp{}{x^1}$.
	 {Calculate} 
	$\nabla_{ad_e^{k-1}g}g=(-1)^{k-1}\nabla_{\fp{}{x^k}}g^i\fp{}{x^i}=(-1)^{k-1}\nabla_{\fp{}{x^k}}\fp{}{x^1}=(-1)^{k-1}\Gamma^i_{k1}\fp{}{x^i}$.
	It follows that $\Gamma^i_{k1}=\Gamma^i_{1k}=0$, for $2\leq i\leq n$ by (MF3), and for $i=1$ by the above form.
	
	Rewrite (MF4) as $\nabla^2_{ad_e^{k-1}g,ad_e^{j-1}g}e=0 \mod \mathcal{E}^1$, for $1\leq j,k\leq n$, and apply it successively for $j=1,\ldots,n$ and for all $1\leq k \leq n$. For $j=1$, first calculate 
	\begin{align*}
		&\nabla_ge=\nabla_{\fp{}{x^1}}e=\fp{}{x^2}+\Gamma^i_{1s}e^s\fp{}{x^i}=\fp{}{x^2} \text{ \quad and then }\\
		&\nabla_{ad_e^{k-1}g}\left(\nabla_ge \right) = (-1)^{k-1}\nabla_{\fp{}{x^k}}\fp{}{x^2}= (-1)^{k-1}\Gamma^i_{k2}\fp{}{x^i}.
	\end{align*}
	On the other hand, $	\nabla_{ad_e^{k-1}g}g=(-1)^{k-1}\nabla_{\fp{}{x^k}}\fp{}{x^1}= (-1)^{k-1}\Gamma^1_{k1}\fp{}{x^1}=0$ and hence $\nabla_{\nabla_{ad_e^{k-1}g}g}e=0$. Thus, by \eqref{eq:cd2},
	\begin{align*}
		\nabla^2_{ad_e^{k-1}g,g}e=\nabla_{ad_e^{k-1}g}\left(\nabla_ge \right)- \nabla_{\nabla_{ad_e^{k-1}g}g}e=\\= (-1)^{k-1}\Gamma^i_{k2}\fp{}{x^i}=0 \mod \mathcal{E}^1,
	\end{align*}
	implying that $\Gamma^i_{k2}=\Gamma^i_{2k}=0$ for any $3\leq i\leq n$.

	For $j=2$, calculate
	\begin{align*}
		\nabla_{ad_eg}e=-\nabla_{\fp{}{x^2}}e=-\fp{}{x^3}+\Gamma^i_{2s}e^s\fp{}{x^i}=-\fp{}{x^3} -d
	\end{align*}
	where $d=d^1(x)\fp{}{x^1}+d^2(x)\fp{}{x^2}\in\mathcal{E}^1$, and then 
	\begin{align*}
		&\nabla_{ad_e^{k-1}g}\left(\nabla_{ad_eg}e \right) = (-1)^{k}\nabla_{\fp{}{x^k}}\left(\fp{}{x^3} +d \right)= \\&= (-1)^{k}\left( \Gamma^i_{k3}+\Gamma^i_{k1}d^1+\Gamma^i_{k2}d^2\right) \fp{}{x^i}=\\&= (-1)^{k} \Gamma^i_{k3}\fp{}{x^i} \mod \mathcal{E}^1.
	\end{align*}
	On the other hand, 
	\begin{align*}
		\nabla_{ad_e^{k-1}g}ad_eg&=(-1)^{k}\nabla_{\fp{}{x^k}}\fp{}{x^2}= (-1)^{k}\Gamma^i_{k2}\fp{}{x^i}=\\&= (-1)^{k}\left( \Gamma^1_{k2}\fp{}{x^1}+\Gamma^2_{k2}\fp{}{x^2}\right)
	\end{align*} and $
	\nabla_{\nabla_{ad_e^{k-1}g}ad_eg}e=(-1)^{k}\Gamma^2_{k2}\fp{}{x^3}\mod \mathcal{E}^1$.
	It follows that, modulo $\mathcal{E}^1$,
	\begin{align*}
		\nabla^2&_{ad_e^{k-1}g,ad_eg}e= (-1)^{k}\left(\sum_{i=4}^{n}\Gamma^i_{k3}\fp{}{x^i} + (\Gamma^3_{k3}-\Gamma^2_{k2})\fp{}{x^3} \right),
	\end{align*}
	and, using (MF4), we conclude $\Gamma^i_{k3}=\Gamma^i_{3k}=0$ for any $4\leq i\leq n$ and $\Gamma^3_{k3}=\Gamma^2_{k2}$.
	
	{	Following the same line (with a more tedious calculation), one can prove the general induction step. Namely, assuming, for a fixed $j$,
		\begin{align}
			\label{eq:F2b}
			\begin{split}
				&\Gamma^{j}_{kj}=\Gamma^{j-1}_{kj-1}\\
				&\Gamma^i_{ks}=\Gamma^i_{sk}=0 \quad s+1 \leq i\leq n, \quad 1\leq s\leq j,
			\end{split}
		\end{align}
		one shows by calculating $\nabla^2_{ad_e^{k-1}g, ad_e^{j-1}g}e$,  {with the help of \eqref{eq:formula} of Lemma \ref{lem:formula},} that
		\begin{align*}
			&\Gamma^{j+1}_{kj+1}=\Gamma^{j}_{kj} && \\
			&\Gamma^i_{kj+1}=0 &&\text{ for }j+2 \leq i\leq n 
		\end{align*}
		and thus, by the induction assumption and symmetry of the Christoffel symbols, 
		\begin{align}
				\label{eq:iarg}
			\Gamma^i_{ks}=\Gamma^i_{sk}=0 \quad s+1\leq  i\leq n, \quad 1\leq s\leq j+1.
		\end{align}
		It follows that for each $1\leq k\leq n$ the matrices {consisting} of Christoffel symbols $(\Gamma^i_{kj})$, for $2\leq i,j \leq n$ are upper triangular.
		By the induction argument, \eqref{eq:F2b} holds for all $2\leq j\leq n$ and implies, for any $1\leq k \leq n-1$, 
		\begin{align*}
			\Gamma^{2}_{k2}=\ldots=\Gamma^{n-1}_{kn-1}=\Gamma^n_{kn}=0.
		\end{align*}
		since $\Gamma^n_{kn}=\Gamma^n_{nk}=0$ (as $n>k$). On the other hand, for $k=n$,  \eqref{eq:F2b} implies 
		\begin{align*}
			\Gamma^{2}_{n2}=\ldots=\Gamma^{n-1}_{nn-1}=\Gamma^n_{nn}=\lambda(x)
		\end{align*}
		for a function $\lambda(x)$.}
	Therefore for each $1\leq k\leq n$ the matrices $(\Gamma^i_{kj})$, for $2\leq i,j \leq n$, are strictly upper triangular, and the last one, for   $k=n$, is upper triangular with all diagonal elements equal to each other, which we denote by $\lambda(x)$. The matrices read
	\begin{align*}
		\left(\Gamma^i_{kj} \right)=\left(\begin{array}{ccccccc}
			0&  \Gamma^2_{k3}& \Gamma^2_{k4}  & \ldots & \Gamma^2_{kn-2} & \Gamma^2_{kn-1} & \Gamma^2_{kn} \\ 
			0&  0&  \Gamma^3_{k4}& \ldots  & \Gamma^3_{kn-2} & \Gamma^3_{kn-1} & \Gamma^3_{kn} \\ 
			&  &  \ddots&&    & &   \\ 
			0&  0&  0& \ldots &0  &\Gamma^{n-2}_{kn-1}  &\Gamma^{n-2}_{kn}  \\ 
			0&  0&  0& \ldots &0  &0  &\Gamma^{n-1}_{kn}  \\ 
			0&  0&  0& \ldots &0  &0  &0 
		\end{array}  	 \right),
	\end{align*}
	for $1\leq k \leq n-1$, and
	\begin{align*}
		\left(\Gamma^i_{nj} \right)=\left(\begin{array}{ccccccc}
			\lambda &  \Gamma^2_{n3}& \Gamma^2_{n4}  & \ldots & \Gamma^2_{nn-2} & \Gamma^2_{nn-1} & \Gamma^2_{nn} \\ 
			0&  \lambda &  \Gamma^3_{n4}& \ldots  & \Gamma^3_{nn-2} & \Gamma^3_{nn-1} & \Gamma^3_{nn} \\ 
			&  &  \ddots&&    & &   \\ 
			0&  0&  0& \ldots &\lambda   &\Gamma^{n-2}_{nn-1}  &\Gamma^{n-2}_{nn}  \\ 
			0&  0&  0& \ldots &0  &\lambda   &\Gamma^{n-1}_{nn}  \\ 
			0&  0&  0& \ldots &0  &0  &\lambda 
		\end{array}  	 \right),
	\end{align*}
{and are thus of the desired triangular structure  \eqref{eq:nft2} and it remains to prove that $\lambda=\lambda(x^n)$.}
	Note that in the above matrices we skip the first row $\Gamma^1_{kj}$ and the first column $\Gamma^i_{k1}$. This is due to the fact that $\Gamma^1_{{kj}}=0$ (which can always be achieved by a suitable feedback transformation) and $\Gamma^i_{k1}=0$ by (MF3).
	{
Notice that we have $\mathcal{E}^{n-2}=\spn{\fp{}{x^1},\ldots,\fp{}{x^{n-1}}}$ and thus applying \eqref{eq:formula} of Lemma~\ref{lem:formula}, for $j=n$ and any $1 \leq k \leq n$, we conclude {(set $\Gamma^n_{kn+1}=0$)}
\begin{align}
	\begin{split}
		&(-1)^{n+k-2}	\nabla^2_{ad_e^{k-1}g,ad_e^{n-1}g}e=\nabla^2_{\fp{}{x^k},\fp{}{x^n}}e\\
		& = \bigg( \fp{{\Gamma}^n_{n s}}{{x}^k} {e}^s + {\Gamma}^n_{nk+1} +{\Gamma}^n_{kn+1} - {\Gamma}^{n-1}_{kn}\\
		&+({\Gamma}^d_{ns}{\Gamma}^n_{kd}- {\Gamma}^d_{kn}{\Gamma}^n_{ds}){e}^s\bigg) \!\fp{}{{x}^n} \mod  \mathcal{E}^{n-2}\\
		& = \bigg( \fp{\lambda}{{x}^k} {e}^n + {\Gamma}^n_{nk+1} - {\Gamma}^{n-1}_{kn}\bigg) \fp{}{{x}^n} \mod \mathcal{E}^{n-2},
	\end{split}
	\label{eq:kn}
\end{align} 
{since, }{due to the triangular structure \eqref{eq:iarg}}, $\Gamma^n_{n s}=0$ except for $s=n$ giving $\Gamma^n_{n n}= \lambda$ and, {moreover, the equality} ${\Gamma}^d_{ns}{\Gamma}^n_{kd}\!-\!{\Gamma}^d_{kn}{\Gamma}^n_{ds}=0$ {holds}. Indeed, {in the latter,} $\Gamma^n_{kd}=0$ except $d=k=n$ giving ${\Gamma}^n_{ns}{\Gamma}^n_{nn}- {\Gamma}^n_{nn}{\Gamma}^n_{ns}=0$ and $\Gamma^n_{ds}=0$ except for $d=s=n$ giving ${\Gamma}^n_{nn}{\Gamma}^n_{kn}- {\Gamma}^n_{kn}{\Gamma}^n_{nn}=0$.}

{For \eqref{eq:kn} we will apply (MF4) in three cases. First, if $1\leq k\leq n-2$, then, modulo $\mathcal{E}^{n-2}$, we have
	\begin{align*}
		 \left(\fp{\lambda}{x^k}e^n+\Gamma^n_{nk+1} -\Gamma^{n-1}_{kn} \right) \fp{}{x^n}= \left( \fp{\lambda}{x^k}x^{n-1}\right) \fp{}{x^n}=0,
	\end{align*}
	since all $\Gamma^n_{nk+1}=0$ and all $\Gamma^{n-1}_{kn}=0$ by \eqref{eq:iarg} and $k\leq n-2$.
	Second, for $k=n-1$, we have modulo $\mathcal{E}^{n-2}$,
	\begin{align*}
		& \left(\fp{\lambda}{x^{n-1}}e^n+\Gamma^n_{nn} -\Gamma^{n-1}_{n-1n} \right) \!\fp{}{x^{n}}= \left(\fp{\lambda}{x^{n-1}}e^n+\lambda -\lambda \right) \fp{}{x^{n}}\\&= \left( \fp{\lambda}{x^{n-1}}x^{n-1}\right) \fp{}{x^n}=0.
	\end{align*}
	{Therefore $\fp{\lambda}{x^k}=0$, for $1\leq k\leq n-1$,} implying that $\lambda$ is a function of the last variable $x^n$ only, i.e.  $\lambda=\lambda(x^n)$, which gives the system in the desired form \eqref{eq:nft2}
	Third, for $k=n$, we have modulo $\mathcal{E}^{n-2}$,
	\begin{align*}
		\bigg( \fp{\lambda}{{x}^n} {e}^n\! +\! {\Gamma}^n_{nn+1}\! -\! {\Gamma}^{n-1}_{nn}\bigg) \!\fp{}{{x}^n}\!=\!\bigg( \fp{\lambda}{{x}^n} {x}^{n-1}\! - {\Gamma}^{n-1}_{nn}\bigg)\! \fp{}{{x}^n}\!=0,
	\end{align*}}
	implying that ${\Gamma^{n-1}_{nn}=L_e\lambda}$, since $\fp{\lambda (x^n)}{x^n}x^{n-1}=L_e\lambda$.
	
	Now, transform system \eqref{eq:nf}, {satisfying \eqref{eq:nft2},} via the local mechanical diffeomorphism $\Phi: \T Q\rightarrow \T \bar{Q}$ 
	\begin{align}
		\label{eq:difha}
		\begin{split}
			&\bar{x}=\phi(x)\\
			&\bar{y}=D\phi(x)y,
		\end{split}
		\text{\quad where }
		\begin{split}
			&\phi(x)=\left(
				L^{n-1}_eh,
				\ldots,
				L_eh,
				h
		  \right)^T,
		\end{split}
	\end{align}
with $h(x^n)=\int^{x^n}_0 \Lambda(s_2) ds_2$, where $\Lambda(s_2)=\exp\left(\int^{s_2}_0 \lambda(s_1) ds_1 \right)$.

	Denote by $\bar{\Gamma}^i_{jk}, \bar{e}, \bar{g}$ the objects of the system expressed in coordinates $\bar{x}=\phi(x)$. Applying  feedback $\bar{u}=-\bar{\Gamma}^1_{jk}\bar{y}^j\bar{y}^k+L^n_eh+uL_gL_e^{n-1}h$, the transformed system becomes
	\begin{align}
		\label{eq:u2}
		\begin{split}
			\dot{\bar{x}}^{1}&=\bar{y}^{1}\\
			\dot{\bar{x}}^{i}&=\bar{y}^{i}
			\end{split}
			\begin{split}
				\dot{\bar{y}}^{1}&=\bar{u}\\
				\dot{\bar{y}}^{i}&=-\bar{\Gamma}^{i}_{jk}\bar{y}^j\bar{y}^k +\bar{x}^{i-1}, \quad 2 \leq i \leq n,
			\end{split}
	\end{align}
 whose vector fields are {
$\bar{e}=\bar{x}^{i-1}\fp{}{\bar{x}^i}${, where $x^0=0$,} and $	{\bar{g}=\fp{}{\bar{x}^1}}.
$}
{Transformed system \eqref{eq:u2} is still of the form \eqref{eq:nf} and at the moment we ignore how $\Gamma^i_{jk}$ have been changed into $\bar{\Gamma}^i_{jk}$. Below we will prove that all $\bar{\Gamma}^i_{jk}$ vanish. To this end, we first} 
	calculate explicitly the time-evolution of the pair $\left(\bar{x}^n,\bar{y}^n \right) $
	\begin{align*}
		\dot{\bar{x}}^n&=\frac{d}{dt}h(x^n)=\Lambda(x^n) \dot{x}^n=\Lambda(x^n) y^n=\bar{y}^n\\
		\dot{\bar{y}}^n&=\frac{d}{dt}\left(\Lambda(x^n) y^n \right)= \Lambda(x^n) \lambda(x^n) \dot{x}^ny^n + \Lambda(x^n) \dot{y}^n \\&=\Lambda(x^n) \lambda(x^n) y^ny^n + \Lambda(x^n) \dot{y}^n\\
		&=\Lambda(x^n) \lambda(x^n) y^ny^n+ \Lambda(x^n)\left( -\Gamma^n_{nn}(x^n) y^n y^n + x^{n-1}\right)\\&=  \Lambda(x^n)  x^{n-1}=\bar{x}^{n-1},
	\end{align*}
	since $\bar{x}^{n-1}=L_eh=\Lambda(x^n) x^{n-1}$. It follows that $\bar{\Gamma}^n_{jk}=0$, for all $1\leq k,j \leq n$.

{	For transformed system \eqref{eq:u2}, we rewrite \eqref{eq:formula} by adding "bars" as}
\begin{align}
			\label{eq:F4}
	\begin{split}
		&\nabla^2_{ad_{\bar{e}}^{k-1}\bar{g}, ad_{\bar{e}}^{j-1}\bar{g}}\bar{e}=(-1)^{j+k}\bigg( \fp{\bar{\Gamma}^i_{j s}}{\bar{x}^k} \bar{e}^s + \bar{\Gamma}^i_{jk+1}\\& +\bar{\Gamma}^i_{kj+1}  +(\bar{\Gamma}^d_{js}\bar{\Gamma}^i_{kd}- \bar{\Gamma}^d_{kj}\bar{\Gamma}^i_{ds})\bar{e}^s 
		- \bar{\Gamma}^{i-1}_{kj}\bigg) \fp{}{\bar{x}^i}
	\end{split}
\end{align}
and by (MF4), we have
	\begin{align*}
		\nabla^2_{ad_{\bar{e}}^{k-1}\bar{g}, ad_{\bar{e}}^{j-1}\bar{g}}\bar{e}=(-1)^{j+k}\bar{a}_{kj}^n(\bar{x}) \fp{}{\bar{x}^n}=0 \mod \mathcal{E}^{n-2},
	\end{align*}
	where $\bar{a}_{kj}^n(\bar{x})= \fp{\bar{\Gamma}^n_{j s}}{\bar{x}^k} \bar{e}^s + \bar{\Gamma}^n_{jk+1} +\bar{\Gamma}^n_{kj+1}  + (\bar{\Gamma}^d_{js}\bar{\Gamma}^n_{kd}- \bar{\Gamma}^d_{kj}\bar{\Gamma}^n_{ds})\bar{e}^s - \bar{\Gamma}^{n-1}_{kj}$, which implies (since $\bar{\Gamma}^n_{kj}=0$, for $1\leq j,k \leq n$) that 
	$
		\bar{a}_{kj}^n(\bar{x})= \bar{\Gamma}^{n-1}_{kj}=0
	$.
	Now assume $\bar{\Gamma}^i_{kj}=0$ for a certain $1\leq i \leq n-1$  and any $1\leq j,k \leq n$. Then \eqref{eq:F4} and (MF4) imply $\bar{\Gamma}^{i-1}_{kj}=0$. Therefore we have proved that all Christoffel symbols of \eqref{eq:u2} vanish and thus the system is  a linear controllable \lms, since the vector field $\bar{e}=\bar{x}^{i-1}\fp{}{\bar{x}^i}$ is linear and $\bar{g}=\fp{}{\bar{x}^1}$ is constant.  		
\end{proof}

The above theorem does not work for systems with 2 degrees of freedom, i.e. for n=2, as that case is too restrictive for involutivity, see Remark \ref{rem:2d} below. Therefore we state the following theorem for MF-linearization of \ms with 2 degrees of freedom.

\begin{theorem}
	\label{thm:MFn2}
	A mechanical system \eqref{eq:ms} with 2 degrees of freedom is, locally around $x_0$, MF-linearizable to a controllable linear $(\mathcal{LMS})$ if and only if it satisfies in a neighborhood of $x_0$
	\begin{enumerate}[(MF1)']
		\item $g$ and $ ad_eg$ are independent at $x_0$,
		\setcounter{enumi}{2}
		\item $\nabla_{g}\, g\in \mathcal{E}^0$ and $\nabla_{ad_eg}\, g\in \mathcal{E}^0$,
		\setcounter{enumi}{4}
		\item $\nabla^2_{g,ad_eg} \, ad_eg-\nabla^2_{ad_eg,g} \, ad_eg\in \mathcal{E}^0$.
	\end{enumerate}
\end{theorem}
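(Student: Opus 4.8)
The plan is to imitate the two-stage argument from the proof of Theorem~\ref{thm:MFC}. For $n=2$ conditions (MF2) and (MF4) are void: the rank-one distribution $\mathcal{E}^0$ is automatically involutive and, near $x_0$, of constant rank, and $\nabla^2_{ad_e^kg,ad_e^jg}e\in\mathcal{E}^1=\T Q$ holds trivially once (MF1)' is assumed. It is the curvature-type condition (MF5)' that replaces (MF4), and it is convenient to rephrase it: since $\nabla$ is torsion-free, Lemma~\ref{prop:cov2} gives $\nabla^2_{X,Y}Z-\nabla^2_{Y,X}Z=R(X,Y)Z$, where $R(X,Y)Z=\nabla_X\nabla_YZ-\nabla_Y\nabla_XZ-\nabla_{[X,Y]}Z$ is the curvature tensor, so (MF5)' reads $R(g,ad_eg)\,ad_eg\in\mathcal{E}^0$.

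\textit{Necessity.} For a controllable \lms the connection is flat, $e=Ex$ and $g=b$ is constant, hence $ad_eg=-Eb$ is constant and $R\equiv0$; thus (MF1)', (MF3)', (MF5)' hold for \lms. It remains to check $G_{MF}$-invariance. Condition (MF1)' is the classical feedback invariant (Remark~\ref{rem:inv}, as $\mathcal{E}^0$ is involutive), and (MF3)' is treated exactly as in the proof of Theorem~\ref{thm:MFC} (using $\tilde\nabla_XY=\nabla_XY\bmod\mathcal{E}^0$, $ad_{\tilde e}\tilde g=\beta\,ad_eg\bmod\mathcal{E}^0$, $\nabla_X\tilde g=\nabla_Xg\bmod\mathcal{E}^0$). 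For (MF5)', formula~\eqref{eq:mc4n} and symmetry of $\gamma$ give, modulo $\mathcal{E}^0$, $\tilde R(X,Y)Z=R(X,Y)Z-\gamma(Y,Z)\nabla_Xg+\gamma(X,Z)\nabla_Yg$. Writing $\tilde g=\beta g$ and $\tilde V:=ad_{\tilde e}\tilde g=\beta\,ad_eg+cg$ with $c\in C^\infty(Q)$ (cf.\ \eqref{eq:Ff}), tensoriality of $\tilde R$ and $\tilde R(g,g)=0$ express $\tilde R(\tilde g,\tilde V)\tilde V$ as a $C^\infty(Q)$-combination of $\tilde R(g,ad_eg)\,ad_eg$ and $\tilde R(g,ad_eg)\,g$. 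The first equals $R(g,ad_eg)\,ad_eg$ modulo $\mathcal{E}^0$ (the correction terms lie in $\mathcal{E}^0$ by (MF3)'), hence is in $\mathcal{E}^0$ by (MF5)'. The second equals $R(g,ad_eg)\,g$ modulo $\mathcal{E}^0$; since $n=2$ we have $[g,ad_eg]\in\mathcal{E}^1=\T Q=\spn{g,ad_eg}$, so $\nabla_{[g,ad_eg]}g\in\mathcal{E}^0$ by (MF3)', and expanding $R(g,ad_eg)g=\nabla_g\nabla_{ad_eg}g-\nabla_{ad_eg}\nabla_gg-\nabla_{[g,ad_eg]}g$ together with (MF3)' ($\nabla_gg,\nabla_{ad_eg}g\in\mathcal{E}^0$) shows it lies in $\mathcal{E}^0$. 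Hence $\tilde R(\tilde g,\tilde V)\tilde V\in\mathcal{E}^0=\tilde{\mathcal{E}}^0$.

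\textit{Sufficiency.} By (MF1)', the control system $\dot x=e+gu$ on $Q$ satisfies the classical feedback-linearization conditions, which for $n=2$ reduce to (MF1)' (involutivity of $\mathcal{E}^0$ is automatic). Hence, exactly as in the proof of Theorem~\ref{thm:MFC}, a mechanical diffeomorphism together with a feedback $(\alpha,\beta,\gamma)$ brings \eqref{eq:ms} into \eqref{eq:nf} with $n=2$: $g=\fp{}{x^1}$, $e=x^1\fp{}{x^2}$, $\Gamma^1_{jk}=0$. Then $\nabla_gg=\Gamma^i_{11}\fp{}{x^i}$ and $\nabla_{ad_eg}g=-\Gamma^i_{12}\fp{}{x^i}$, so (MF3)' forces $\Gamma^2_{11}=\Gamma^2_{12}=\Gamma^2_{21}=0$, leaving only $\Gamma^2_{22}=:\lambda(x)$. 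A direct computation in these coordinates gives $R(g,ad_eg)\,ad_eg=\fp{\lambda}{x^1}\fp{}{x^2}$, which lies in $\mathcal{E}^0=\spn{\fp{}{x^1}}$ only if $\fp{\lambda}{x^1}=0$; thus (MF5)' forces $\lambda=\lambda(x^2)$. Now, as in the second step of the proof of Theorem~\ref{thm:MFC}, apply the mechanical diffeomorphism $\phi(x)=(L_eh,h)^T$ with $h(x^2)=\int_0^{x^2}\Lambda(s_2)\,ds_2$ and $\Lambda(s_2)=\exp\left(\int_0^{s_2}\lambda(s_1)\,ds_1\right)$, followed by the feedback $\bar u=-\bar\Gamma^1_{jk}\bar y^j\bar y^k+L_e^2h+uL_gL_eh$. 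Differentiating $\bar y^2=\Lambda(x^2)y^2$ along \eqref{eq:nf} and using $\Lambda'=\lambda\Lambda$, the $\lambda$-terms cancel and $\dot{\bar y}^2=\Lambda(x^2)x^1=\bar x^1$, so all $\bar\Gamma^2_{jk}$ vanish; the feedback gives $\dot{\bar y}^1=\bar u$ and cancels $\bar\Gamma^1_{jk}$. The resulting system, with $\bar e=(0,\bar x^1)^T$ and $\bar g=\fp{}{\bar x^1}$, is a controllable \lms.

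\textit{The main obstacle.} The delicate point is the $G_{MF}$-invariance of (MF5)'. Because the feedback scales $g$ by $\beta$, it scales $ad_eg$ by $\beta$ only modulo $\mathcal{E}^0$, so $\tilde R(\tilde g,ad_{\tilde e}\tilde g)\,ad_{\tilde e}\tilde g$ unavoidably acquires the auxiliary term $\tilde R(g,ad_eg)\,g$, which is not among the stated conditions; one must recognize that it still lies in $\mathcal{E}^0$, as a consequence of (MF3)' \emph{together with} the dimension restriction $n=2$ (which makes $[g,ad_eg]$ a combination of $g$ and $ad_eg$). One must also verify that the antisymmetric combination appearing in (MF5)' is precisely the one that annihilates the non-tensorial correction produced by $\gamma$ in \eqref{eq:mc4n}.
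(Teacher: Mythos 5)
Your proof is correct, and its overall architecture coincides with the paper's: conditions hold for \lms and are $G_{MF}$-invariant (necessity); normal form via (MF1)', killing $\Gamma^2_{11},\Gamma^2_{12}$ by (MF3)', reducing $\Gamma^2_{22}$ to $\lambda(x^2)$ by (MF5)', then straightening with $h=\int_0^{x^2}\Lambda$ and a final feedback (sufficiency). The one place where you genuinely diverge is the invariance of (MF5)'. The paper verifies it only in the case actually needed for necessity, namely when the untransformed system is already an \lms, by expanding with the product rule of Lemma \ref{prop:cov2}(iii) and invoking the \lms-specific identities $\nabla_{ad_e^ig}\,ad_e^jg=0$, $\nabla^2_{\cdot,\cdot}\,ad_e^ig=0$, $[ad_e^jg,ad_e^kg]=0$, the computation terminating in $L_{[g,ad_eg]}\beta=0$. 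You instead note that, the connection and $\gamma$ being symmetric, the antisymmetrized second covariant derivative in (MF5)' is the curvature $R(g,ad_eg)\,ad_eg$, and likewise for $\tilde\nabla$; tensoriality of $\tilde R$ then reduces everything to $\tilde R(g,ad_eg)\,ad_eg$ plus the extra term $\tilde R(g,ad_eg)\,g$, which you correctly dispose of using (MF3)' together with the fact that $[g,ad_eg]\in\spn{g,ad_eg}$ when $n=2$. This buys a genuine $G_{MF}$-invariance statement for (MF5)' under hypotheses (MF1)' and (MF3)' alone (not merely for transforms of an \lms), and it makes transparent why the antisymmetrization is the right object: it cancels both the non-tensorial $\gamma$-correction in \eqref{eq:mc4n} and the first-order Christoffel dependence. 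The price is the auxiliary lemma $R(g,ad_eg)\,g\in\mathcal{E}^0$, which the paper never needs. Your sufficiency computations, including $R(g,ad_eg)\,ad_eg=\fp{\lambda}{x^1}\fp{}{x^2}$ in the normal form, agree with the paper's.
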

\begin{remark}
	\label{rem:2d}
	If $n=2$, then $\mathcal{E}^0$ is of rank 1, thus involutive and (MF2) is trivially satisfied, and so is (MF4) because $\mathcal{E}^1=\T Q$ (cf. Theorem \ref{thm:MFC}). Therefore (MF2)' and (MF4)' are absent and replaced by (MF5)' that guarantees that we can compensate the Christoffel symbols (as do (MF3)-(MF4) for $n\geq 3$).
\end{remark}
\begin{proof}
	\textit{Necessity.}
	Note that (MF1)' is equivalent to (MF1) and (MF3)' is (MF3) of Theorem \ref{thm:MFC}. {Although Theorem \ref{thm:MFC} applies to $n\geq 3$, the necessity part of its proof remains valid for any $n\geq 2$ so it shows necessity of (MF1)'-(MF3)'.}
	Therefore we need to show necessity of (MF5)'. For a controllable \lms we have $\Gamma^i_{jk}=0$, $g=b$ and $ad_eg=-Eb$ are independent, and
	\begin{align}
		\label{eq:lms2}
		\begin{split}
				\nabla_{ad^i_eg}ad^j_eg=0,\
		\nabla^2_{ad^j_eg,ad^k_eg} ad^i_eg=0,\
		\left[ ad^j_eg,ad^k_eg\right] =0,
		\end{split}
	\end{align}
	for $0\leq i,j,k \leq 1$.
	We will use formula \eqref{eq:mc4n} to show that (MF5)' is invariant under mechanical feedback.
	Denote $\ti{\nabla}, \ti{e}, \ti{g}, \gamma$ as in \eqref{eq:cl1}. Then we calculate
	\begin{align*}
		\begin{split}
			\ti{\nabla}^2_{\ti{g},ad_{\ti{e}}\ti{g}}ad_{\ti{e}}\ti{g}=&\nabla^2_{\ti{g},ad_{\ti{e}}\ti{g}}ad_{\ti{e}}\ti{g}-\gamma(ad_{\ti{e}}\ti{g},ad_{\ti{e}}\ti{g}) \nabla_{\ti{g}}\ti{g}\\ &+\gamma(\ti{g},ad_{\ti{e}}\ti{g})\nabla_{\ti{g}}ad_{\ti{e}}\ti{g} \mod \mathcal{E}^0,\\
			\ti{\nabla}^2_{ad_{\ti{e}}\ti{g},\ti{g}}ad_{\ti{e}}\ti{g}=&\nabla^2_{ad_{\ti{e}}\ti{g},\ti{g}}ad_{\ti{e}}\ti{g}-\gamma(g,ad_{\ti{e}}\ti{g}) \nabla_{ad_{\ti{e}}\ti{g}}\ti{g}\\ &+ \gamma(ad_{\ti{e}}\ti{g},\ti{g})\nabla_{\ti{g}}ad_{\ti{e}}\ti{g} \mod \mathcal{E}^0.
		\end{split}
	\end{align*}
	The second terms of the right hand side of both equations are in $\mathcal{E}^0$ due to the feedback invariance of (MF3)', while the third terms are equal since $\gamma(X,Y)=\gamma(Y,X)$ is symmetric. Therefore we conclude 
	\begin{align*}
	&	\ti{\nabla}^2_{\ti{g},ad_{\ti{e}}\ti{g}}ad_{\ti{e}}\ti{g}-\ti{\nabla}^2_{ad_{\ti{e}}\ti{g},\ti{g}}ad_{\ti{e}}\ti{g}\\&=	\nabla^2_{\ti{g},ad_{\ti{e}}\ti{g}}ad_{\ti{e}}\ti{g}-\nabla^2_{ad_{\ti{e}}\ti{g},\ti{g}}ad_{\ti{e}}\ti{g}\mod \mathcal{E}^0.
	\end{align*}
	Denoting $ad_{\ti{e}}\ti{g}=\beta ad_eg+d^0 g$ (see \eqref{eq:Ff}) and by Lemma \ref{prop:cov2}\! (i), we~have 
	\begin{align*}
		\nabla^2_{\ti{g},ad_{\ti{e}}\ti{g}}ad_{\ti{e}}\ti{g}&= \nabla^2_{\beta g,\beta ad_eg+d^0 g  }ad_{\ti{e}}\ti{g}\\ \qquad&=\beta^2 \nabla^2_{g,ad_eg}ad_{\ti{e}}\ti{g}+ \beta d^0 \nabla^2_{g,g}ad_{\ti{e}}\ti{g}\\
		\nabla^2_{ad_{\ti{e}}\ti{g},\ti{g}}ad_{\ti{e}}\ti{g}&= \nabla^2_{\beta ad_eg+d^0 g , \beta g }ad_{\ti{e}}\ti{g}\\ \qquad&=\beta^2 \nabla^2_{ad_eg,g}ad_{\ti{e}}\ti{g}+ \beta d^0 \nabla^2_{g,g}ad_{\ti{e}}\ti{g},
	\end{align*}
	where the last terms on the right are equal, implying
	\begin{align*}
	&	\nabla^2_{\ti{g},ad_{\ti{e}}\ti{g}}ad_{\ti{e}}\ti{g}-	\nabla^2_{ad_{\ti{e}}\ti{g},\ti{g}}ad_{\ti{e}}\ti{g}\\ 
	&\quad =\beta^2\left(\nabla^2_{g,ad_eg}ad_{\ti{e}}\ti{g}-\beta^2 \nabla^2_{ad_eg,g}ad_{\ti{e}}\ti{g} \right) 
	\end{align*}
	and it remains to prove that
	$
	\nabla^2_{g,ad_eg}ad_{\ti{e}}\ti{g}-\nabla^2_{ad_eg,g}ad_{\ti{e}}\ti{g}\in \mathcal{E}^0,
	$
	which we show using Lemma \ref{prop:cov2}(iii), where $X,Y$ stand for either $g$ or $ad_eg$. Denote $\nabla_X\beta=L_X\beta$ and $\nabla^2_{X,Y}\beta=L_XL_Y\beta-L_{\nabla_XY}\beta$ (see Lemma \ref{prop:cov2}) and calculate
	\begin{align*}
		&\nabla^2_{X,Y}ad_{\ti{e}}\ti{g}=\nabla^2_{X,Y}\left(\beta ad_eg+d^0 g \right) = \beta \nabla^2_{X,Y}ad_eg\\
		&\quad + L_X\beta \nabla_Yad_eg + L_Y\beta \nabla_X ad_eg+ \left( \nabla^2_{X,Y}\beta\right)  ad_eg\\ 
		&\quad +d^0 \nabla^2_{X,Y} g + L_Xd^0 \nabla_Y g +L_Yd^0 \nabla_X g + \left( \nabla^2_{X,Y}d^0\right)  g\\
		&= \left( \nabla^2_{X,Y}\beta \right) ad_eg \mod \mathcal{E}^0,
	\end{align*}
	since all $\nabla^2_{X,Y}X=0$ and $\nabla_X Y=0$ , see \eqref{eq:lms2}. Therefore we have
	\begin{align*}
	&	\nabla^2_{g,ad_eg}ad_{\ti{e}}\ti{g}-\nabla^2_{ad_eg,g}ad_{\ti{e}}\ti{g}\\ &\qquad=\left( \nabla^2_{g,ad_eg}\beta -  \nabla^2_{ad_eg,g}\beta\right)  ad_eg \mod \mathcal{E}^0.
	\end{align*}
	Finally, we calculate 
	\begin{align*}
		&\nabla^2_{g,ad_eg}\beta -  \nabla^2_{ad_eg,g}\beta=L_gL_{ad_eg}\beta-L_{\nabla_gad_eg}\beta\\
		&- \left(L_{ad_eg}L_{g}\beta-L_{\nabla_{ad_eg}g}\beta \right)
		=L_{\left[ g,ad_eg\right] }\beta=0 ,
	\end{align*}
	which shows necessity of (MF5)'.
	
	\textit{Sufficiency.}
	By (MF1)', $\rk \mathcal{E}^1=2$, and $\mathcal{E}^{0}=\spn{g}$ is of constant rank $1$ and thus always involutive, hence the system is, locally around $x_0$ ({since} $g(x_0)\neq0$), MF-equivalent to {(cf. \eqref{eq:nf})}
	\begin{align*}
		\begin{split}
			\dot{x}^{1}&=y^{1}\\
			\dot{x}^{2}&=y^{2}
		\end{split}
	\begin{split}
		\dot{y}^{1}&=u\\
		\dot{y}^{2}&=-\Gamma^{2}_{jk}y^jy^k +x^{2}.
	\end{split}
	\end{align*}
	We have $g=\fp{}{x^1}$, $ad_eg=-\fp{}{x^2}$ and now we calculate
	\begin{align*}
		\nabla_gg=\Gamma^2_{11}\fp{}{x^2}\qquad
		\nabla_{ad_eg}g=-\Gamma^2_{12}\fp{}{x^2},
	\end{align*} 
	which by (MF3)' are in $\mathcal{E}^0=\spn{\fp{}{x^1}}$, implying $\Gamma^2_{11}=\Gamma^2_{12}=\Gamma^2_{21}=0$.
	It follows
	$
	\nabla_gg=\nabla_{ad_eg}g=\nabla_{g}ad_eg=0$, and $
	\nabla_{ad_eg}ad_eg=\Gamma^2_{22}\fp{}{x^2}
	$
	and thus 
	\begin{align*}
		&\nabla^2_{g,ad_eg} \, ad_eg-\nabla^2_{ad_eg,g} \, ad_eg=\nabla_g \nabla_{ad_eg}ad_eg \\
		&- \nabla_{\nabla_gad_eg}ad_eg-\nabla_{ad_eg} \nabla_{g}ad_eg - \nabla_{\nabla_{ad_eg}g}ad_eg\\&=\nabla_g \nabla_{ad_eg}ad_eg=\nabla_{\fp{}{x^1}}\Gamma^2_{22}\fp{}{x^2}=\fp{\Gamma^2_{22}}{x^1}\fp{}{x^2}
	\end{align*}
	implying, by (MF5)', $\fp{\Gamma^2_{22}}{x^1}=0$, i.e. $\Gamma^2_{22}(x^2)=\lambda(x^2)$.
	
	Now, we transform the system via the local mechanical diffeomorphism $\Phi: \T Q\rightarrow \T \bar{Q}$ 
	(compare {to} \eqref{eq:difha})
	\begin{align*}
		\begin{split}
			&\bar{x}=\phi(x)\\
			&\bar{y}=D\phi(x)y,
		\end{split}
		\text{\qquad \qquad where }
		\begin{split}
			&\phi(x)=\left(	L_eh,h \right)^T ,
		\end{split}
	\end{align*}
	with
	${h(x^2)=\int^{x^2}_0 \Lambda(s_2) ds_2}$ and ${
	\Lambda(s_2)=\exp\left(\int^{s_2}_0 \lambda(s_1) ds_1 \right)}$.
	
	We calculate the evolution of the pair $(\bar x(t),\bar y(t))$ of transformed coordinates, using $\frac{d}{dt}h\left( x^2(t)\right) =\Lambda\left( x^2(t)\right) \dot{x}^2(t) $ and  $\frac{d}{dt}\Lambda\left( x^2(t)\right)= \lambda\left( x^2(t)\right) \Lambda\left( x^2(t)\right) \dot{x}^2(t)$; \ first we get
	\begin{align*}
		\dot{\bar{x}}^2&=\frac{d}{dt}h(x^2)=\Lambda(x^2) y^2=\bar{y}^2\\
		\dot{\bar{y}}^2&=\Lambda(x^2) \lambda(x^2) y^2y^2 + \Lambda(x^2) \dot{y}^2=\Lambda(x^2) \lambda(x^2) y^2y^2\\
		&+ \Lambda(x^2)\left( -\lambda(x^2) y^2 y^2 + x^{2}\right)=  \Lambda(x^2)  x^{1}=\bar{x}^{1}\\
	\text{and then}\quad	\dot{\bar{x}}^1&=\Lambda(x^2){y}^{1} + \frac{d}{dt}\Lambda\left( x^2(t)\right) x^1 y^2 =\bar{y}^1\\
		\dot{\bar{y}}^1&=-\bar{\Gamma}^1_{jk}\bar{y}^j\bar{y}^k+L_e^2h+uL_gL_eh,
	\end{align*}
	where we denote by $\bar{\Gamma}^1_{jk}$ the Christoffel symbols in the  $\dot{\bar{y}}^1$-equation of the transformed system. Applying  the feedback $\bar{u}=-\bar{\Gamma}^1_{jk}\bar{y}^j\bar{y}^k+L^2_eh+uL_gL_eh$, we get a controllable linear mechanical system in the canonical form
	$\dot{\bar{x}}^{1}=\bar{y}^{1},\  \dot{\bar{y}}^{1}=\bar{u},\ \dot{\bar{x}}^{2}=\bar{y}^{2},\  \dot{\bar{y}}^{2}=\bar{x}^1.$
\end{proof}
\section{Examples}
\label{sec:app}
\textbf{Example 1 (cont.):}
{For system \eqref{eq:ex1}, we have $g=\fp{}{x^2}$ and $ad_eg=-\fp{}{x^1}$ are independent. We check MF-linearizability using Theorem~\ref{thm:MFn2}. A simple calculation shows that $\nabla_gg=\nabla_{ad_eg}g=0\in\mathcal{E}^0$, but $\nabla^2_{g,ad_eg} \, ad_eg-\nabla^2_{ad_eg,g} \, ad_eg=\fp{}{x^1}\notin \mathcal{E}^0$, therefore the system is not MF-linearizable.}

{
Thus \eqref{eq:ex1} is an example of a system that is F-linearizable but not MF-linearizable. For such systems the choice is: either to F\text{-}linearize for the price of loosing the mechanical structure or to keep the mechanical structure but to get rid of the linearization.}

\textbf{Example 2:} Consider the equation of dynamics of the Inertia Wheel Pendulum \cite{iwp} {with constant parameters $m_0,m_d,J_2$}:
\begin{align*}
		\dot{x}^1=y^1,\quad
		\dot{x}^2=y^2,\quad
		\dot{y}^1=e^1+g^1 u,\quad
		\dot{y}^2=e^2+g^2 u,
\end{align*}
 $	e^1=\frac{m_0}{m_d}\sin x^1$, $e^2=-\frac{m_0}{m_d}\sin x^1$,  $g^1=-\frac{1}{m_d}$,  $g^2=\frac{m_d+J_2}{J_2 m_d}$.

We will verify whether the conditions of Theorem \ref{thm:MFn2} are satisfied. First, we calculate $ad_eg=(	\frac{m_0}{m_d^2}\cos x^1 )  \fp{}{x^1} - ( 	\frac{m_0}{m_d^2}\cos x^1)  \fp{}{x^2}$.
It can be checked that $g$ and $ad_eg$ are independent for $x^1\neq \pm \frac{\pi}{2}$, which corresponds to the horizontal position of the pendulum, therefore (MF1)' is satisfied everywhere except {for} $x^1= \pm \frac{\pi}{2}$. Next, we verify condition (MF2)' by calculating $\nabla_gg=\nabla_{ad_eg}g=0\in \mathcal{E}^0$. Finally, a direct calculation shows
\begin{align*}
		\nabla^2_{g,ad_eg} \, ad_eg&=\nabla^2_{ad_eg,g} \, ad_eg=\\
&	= ( \frac{m_0^2}{m_d^5} \cos^2 x^1) \fp{}{x^1} - (\frac{m_0^2}{m_d^5} \cos^2 x^1 ) \fp{}{x^2},
\end{align*}
thus $\nabla^2_{g,ad_eg} \, ad_eg-\nabla^2_{ad_eg,g} \, ad_eg=0\in \mathcal{E}^0$ satisfies (MF5)'.
The system is MF-linearizable. A linearizing function is $h(x)=\frac{m_d+J_2}{J_2}x^1+x^2$ (all others giving  MF-linearization are of the form $\sigma\, h(x)$, where $\sigma \in \mathbb{R} \backslash  \left\lbrace 0\right\rbrace $). {Due to the proof of Theorem \ref{thm:MFn2},} the linearizing diffeomorphism is $(\ti{x},\ti{y})=\Phi(x,y)=(\phi(x),D\phi(x) y)$ with $\phi(x)=(h,L_eh)^T$. The system in new coordinates reads
\begin{align}
	\label{eq:iwp-u}
	\dot{\tilde{x}}^1&=\frac{m_d+J_2}{J_2}y^1+y^2=\tilde{y}^1\nonumber\\
	\dot{\tilde{y}}^1&=\frac{m_d+J_2}{J_2}\left(\frac{m_0}{m_d}\sin x^1 - \frac{1}{m_d}u \right) -\frac{m_0}{m_d} \sin x^1 + \frac{m_d +J_2}{m_2 J_2}u\nonumber\\
	&=\frac{m_0}{J_2}\sin x^1=L_eh=\tilde{x}^2\\
	\dot{\tilde{x}}^1&=\frac{m_0}{J_2}\cos x^1 y^1=\tilde{y}^2\nonumber\\
	\dot{\tilde{y}}^2&=-\frac{m_0}{J_2} \sin x^1 y^1y^1+\frac{m_0^2}{2m_dJ_2}\sin(2x^1)- \frac{m_0}{m_d J_2}\cos x^1u=\tilde{u}\nonumber.
\end{align}

\textbf{Example 3:} We will study MF-linearizability of the TORA3 system (see Figure \ref{fig:tora}), which is based on the TORA system (Translational Oscillator with Rotational Actuator) studied in the literature, e.g. \cite{tora} (however we add gravitational effects).
\begin{figure}[h!]
	\centering
	\includegraphics[width=0.8\linewidth]{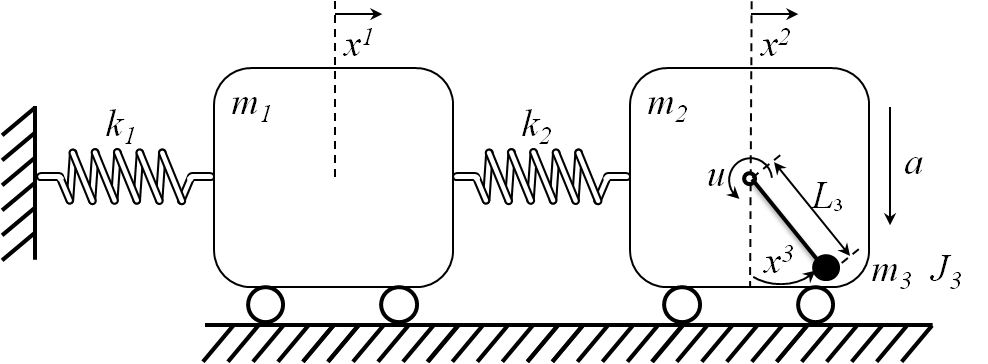}
	\caption{The TORA3 system}
	\label{fig:tora}
\end{figure}
 It consists of a two dimensional spring-mass system, with masses $m_1, m_2$ and spring constants $k_1, k_2$, respectively. A pendulum of length $l_3$, mass $m_3$, and moment of inertia $J_3$ is added to the second body. The displacements of the bodies are denoted by $x^1$ and $x^2$, respectively, and the angle of the pendulum by $x^3$. {The gravitational constant is $a$ and} $u$ is a torque applied to the pendulum. 
 The kinetic energy is
 \begin{align*}
 	T=&\frac{1}{2}m_1(\dot{x}^1)^2+\frac{1}{2}(m_2+m_3)(\dot{x}^2)^2\\&+\frac{1}{2}(J_3+m_3l_3^2)(\dot{x}^3)^2+m_3l_3\cos x^3 \dot{x}^2\dot{x}^3,
 \end{align*}
 {and the mass matrix depends on the configurations.} The potential energy is ${V=\frac{1}{2}k_1(x^1)^2 + \frac{1}{2}k_2(x^2-x^1)^2 -m_3 l_3 a \cos x^3}$.
 The equations of the dynamics read
\begin{align*}
	m_1\ddot{x}^1 + k_1 x^1 - k_2\left(x^2-x^1 \right)&=0\\ 
	(m_2+m_3)\ddot{x}^2+m_3l_3\cos x^3 \ddot{x}^3-m_3 l_3 \sin x^3 (\dot{x}^3)^2\\+k_2\left( x^2-x^1\right)  &=0\\
	m_3l_3\cos x^3 \ddot{x}^2+(m_3l_3^2+J_3)\ddot{x}^3+m_3 l_3 a \sin x^3&=u,
\end{align*}
which can be rewritten on $\T Q$ as
\begin{align}
	\label{eq:tora}
	\begin{split}
		&\dot{x}^1=y^1\qquad \dot{y}^1=\eta^1\\
		&\dot{x}^2=y^2\qquad \dot{y}^2=-\bar{\Gamma}^2_{33}y^3 y^3+\eta^2+\tau^2 u\\
		&\dot{x}^3=y^3\qquad \dot{y}^3=-\bar{\Gamma}^3_{33}y^3 y^3+\eta^3+\tau^3 u
	\end{split}
\end{align}
where ${\bar{\Gamma}^2_{33}=\frac{-\nu_0 \sin x^3}{\nu_1+\nu_2 \sin^2 x^3}}$, ${\bar{\Gamma}^3_{33}=\frac{ \nu_2 \sin x^3 \cos x^3}{\nu_1+\nu_2 \sin^2 x^3}}$, ${\eta^1=-\frac{k_1}{m_1}x^1+\frac{k_2}{m_3} \left(x^2-x^1 \right)}$, ${\eta^2=\frac{\frac{1}{2}\nu_2 a \sin 2x^3-\nu_3(x^2-x^1)}{\nu_1+\nu_2 \sin^2 x^3}}$, $\eta^3=\frac{\nu_4 \left(x^2-x^1 \right)\cos x^3-\nu_5 \sin x^3 }{\nu_1+\nu_2 \sin^2 x^3}$, ${ \tau^2=\frac{-m_3 l_3 \cos x^3}{\nu_1+\nu_2 \sin^2 x^3}}$, ${\tau^3=\frac{m_2+m_3}{\nu_1+\nu_2 \sin^2 x^3}}$, 
with constant parameters: \\
$\nu_0=m_3 l_3 (m_3l_3^2+J_3 ), \ \nu_1=m_2m_3l_3^2+J_3(m_2+m_3)$,
$\nu_2=m_3^2l_3^2,\
\nu_3=k_2\left( m_3l_3^2+J_3\right), \
\nu_4=m_3l_3 k_2$
$\nu_5=m_3 l_3 a (m_2+m_3)$.

To simplify calculations we {apply to the system} a preliminary mechanical feedback\footnote{{This preliminary feedback is not necessary and it is possible to check the conditions and to linearize the system without it, since our method and conditions are feedback invariant.}}
$
	u=\frac{1}{\tau^3}\left(\bar{\Gamma}^3_{33}y^3y^3-\eta^3+ \bar{u} \right)
$ 
which yields
\begin{align}
	\label{eq:toras}
	\begin{split}
		&\dot{x}^1=y^1\\
		&\dot{x}^2=y^2\\
		&\dot{x}^3=y^3
	\end{split}
	\begin{split}
		& \dot{y}^1=-\mu_1 x^1 + \mu_2 x^2\\
		& \dot{y}^2=\mu_3 \sin x^3 y^3 y^3 + \mu_4 (x^1-x^2 )-\mu_3 \cos x^3 u  \\
		& \dot{y}^3=\bar{u},
	\end{split}
\end{align}
with $\mu_1=\frac{k_1+k_2}{m_1}$, 
	$\mu_2=\frac{k_2}{m_1}$, 
	$\mu_3=\frac{m_3 l_3}{m_2+m_3}$,
	$\mu_4=\frac{k_2}{m_2+m_3}$.

Since conditions (MF1)-(MF4) of Theorem \ref{thm:MFC} are MF-invariant, we will check them for system \eqref{eq:toras}. To summarize:
\begin{align*}
	&	\Gamma^2_{33}=-\mu_3 \sin x^3, \quad \text{and} \quad \Gamma^i_{jk}=0 \quad  \text{otherwise}, \\
	&	e= \left( -\mu_1 x^1 + \mu_2 x^2\right) \fp{}{x^1} + \mu_4 \left(x^1-x^2 \right) \fp{}{x^2}\\
	&	g=-\mu_3 \cos x^3 \fp{}{x^2} + \fp{}{x^3}= g^2 \fp{}{x^2} + \fp{}{x^3}.
\end{align*}
We have (notice that calculations are performed on $Q$ only)
\begin{align*}
	ad_eg&=\left( \mu_2 \mu_3 \cos x^3 \right) \fp{}{x^1}-\left( \mu_3 \mu_4 \cos x^3 \right) \fp{}{x^2},\\
		ad_e^2g&= \mu_3\cos x^3\left(  \left( \mu_1 \mu_2+\mu_2\mu_4 \right) \fp{}{x^1}-\left( \mu_2 \mu_4+\mu_4^2 \right)\! \fp{}{x^2}\right) ,
\end{align*}
therefore $\rk \mathcal{E}^2=3$ for $x^3\neq \pm \frac{\pi}{2}$, and (MF1) is satisfied. Now 
\begin{align*}
	\left[g,ad_eg \right]= -\left( \mu_2 \mu_3 \sin x^3 \right) \fp{}{x^1}+\left( \mu_3 \mu_4 \sin x^3 \right) \fp{}{x^2}\in \mathcal{E}^1
\end{align*}
and (MF2) is satisfied. Then, for any vector field $v=v^i(x)\fp{}{x^i}$, 
\begin{align*}
	\nabla_vg&=\left(\fp{g^2}{x^3}+\Gamma^2_{33} \right) v^3\fp{}{x^2}=0,
\end{align*}
thus (MC3) is satisfied if we replace $v$ by, in particular, $g, ad_eg, ad_e^2g$. Finally, for (MF4), we calculate
\begin{align*}
	&	\nabla^2_{g,g}e=\left( \mu_2 \mu_3 \sin x^3 \right) \fp{}{x^1}-\left( \mu_3 \mu_4 \sin x^3 \right) \fp{}{x^2} \in \mathcal{E}^1,\\
	&	\nabla^2_{ad_e^kg,ad_e^jg}e=0 \quad \text{otherwise},
\end{align*}
thus, the system is MF-linearizable. 
Now, {choose} $h=\frac{\mu_4}{\mu_2} x^1 +x^2 +\mu_3 \sin x^3$ (whose differential $\mathrm{d}h$ annihilates $g$ and $ad_eg$), thus we take a linearizing diffeomorphism $\left( \ti{x},\ti{y}\right)=\left(\phi(x),\fp{\phi}{x}(x)y \right)  $, with $\phi(x)=\left(h,L_eh,L_e^2h \right)^T$. {The linearized system is in the form of \eqref{eq:lms} and reads}
\begin{align*}
	&\dot{\ti{x}}^1=\frac{\mu_4}{\mu_2} y^1 +y^2 +\mu_3 \cos x^3 y^3=\ti{y}^1\\
	&\dot{\ti{y}}^1=\frac{\mu_4}{\mu_2} \dot{y}^1\!+\!\dot{y}^2\!+\!\mu_3( \cos x^3 \dot{y}^3\!\!-\! \sin x^3 y^3 y^3)\!=\!\frac{\mu_4(\mu_2-\mu_1)}{\mu_2}x^1\!=\!\ti{x}^2\\
	&\dot{\ti{x}}^2=\frac{\mu_4(\mu_2-\mu_1)}{\mu_2}y^1=\ti{y}^2\\
	&\dot{\ti{y}}^2=\frac{\mu_4(\mu_2-\mu_1)}{\mu_2}\dot{y}^1=\frac{\mu_4(\mu_2-\mu_1)}{\mu_2}\left(\mu_2 x^2-\mu_1 x^1 \right)=\ti{x}^3\\
	&\dot{\ti{x}}^3=\frac{\mu_1\mu_4(\mu_1-\mu_2)}{\mu_2}y^1+\mu_4(\mu_2-\mu_1)y^2=\ti{y}^3\\
	&\dot{\ti{y}}^3=(\mu_2-\mu_1)\mu_3\mu_4\sin x^3 y^3 y^3 - \frac{(\mu_1-\mu_2)(\mu_1^2+\mu_2\mu_4)\mu_4}{\mu_2}x^1\\&\quad+ (\mu_1-\mu_2)(\mu_1+\mu_4)\mu_4 x^2+ (\mu_1-\mu_2)\mu_3\mu_4 \cos x^3 u=\ti{u}.
\end{align*}

\section{Conclusions}
\label{sec:con}
In this paper, we consider MF-linearization of mechanical control systems \eqref{eq:ms} with scalar control. We formulate the problem as a particular case of feedback linearization preserving the mechanical structure of \eqref{eq:ms} so that the transformed system is both linear and mechanical. {As we showed in \cite{NR} and confirmed in this paper, even in the simplest case, the class of MF-linearizable systems is substantially smaller than that of general F-linearizable ones. Therefore, a natural question arises, namely to compare the conditions presented in this paper with those for F-linearization. The answer lies in the interplay between the distributions $\mathcal{E}^i=\spn{ad_e^jg, 0\leq j\leq i }$ and the "usual" for F-linearization  $	\mathcal{D}^i=\spn{ad_F^jG, 0\leq j\leq i }$. We will address this problem in the future. }

\section{Appendix}
\label{sec:apx}
The following lemma can be proved by a direct calculation.
\begin{lemma}
	\label{prop:cov2}
	The second covariant derivative $\nabla^2_{X,Y}Z$ satisfies the following properties:	
	\begin{enumerate}[(i)]
		\item linearity over $C^{\infty}(Q)$ in $X$ and $Y$:
		\begin{align*}
			\nabla^2_{(\alpha_1X_1+\alpha_2X_2),Y}Z&=\alpha_1\nabla^2_{X_1,Y}Z+\alpha_2 \nabla^2_{X_2,Y}Z\\
			\nabla^2_{X,(\alpha_1Y_1+\alpha_2Y_2)}Z&=\alpha_1\nabla^2_{X,Y_1}Z+\alpha_2 \nabla^2_{X,Y_1}Z
		\end{align*} 
		\item linearity over $\mathbb{R}$ in $Z$:
		\begin{align*}
			\nabla^2_{X,Y}(a_1Z_1+a_2Z_2)=a_1\nabla^2_{X,Y}Z_1+a_2\nabla^2_{X,Y}Z_2
		\end{align*}
		\item the product rule:
		\begin{align*}
			\nabla^2_{X,Y}(\beta Z)= &\beta \nabla^2_{X,Y}Z + L_X\beta \nabla_YZ\\&+L_Y\beta \nabla_XZ+ \left( \nabla^2_{X,Y}\beta\right)  Z,
		\end{align*}
	\end{enumerate}
	where $\nabla^2_{X,Y}\beta=L_XL_Y\beta-L_{\nabla_XY}\beta\in C^{\infty}(Q)$,
	$X_i,Y_i,Z_i\in \mathfrak{X}(Q)$, $\alpha_i,\beta\in C^{\infty}(Q)$, and $a_i\in \mathbb{R}$.
\end{lemma}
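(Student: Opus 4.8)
The plan is to verify (i)--(iii) directly from the definition $\nabla^2_{X,Y}Z=\nabla_X\nabla_YZ-\nabla_{\nabla_XY}Z$ together with the two defining properties of the symmetric affine connection $\nabla$: $C^\infty(Q)$-linearity in the lower (differentiation) slot, $\nabla_{fX_1+X_2}W=f\nabla_{X_1}W+\nabla_{X_2}W$, and the Leibniz rule in the upper slot, $\nabla_X(fW)=(L_Xf)W+f\nabla_XW$ (in particular $\nabla_X$ is $\mathbb{R}$-linear in $W$). One could instead grind everything out in coordinates using \eqref{eq:CD1}, but the invariant computation is shorter and makes the relevant cancellations transparent.

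For (i), linearity in $X$ is immediate: lower-slot linearity gives $\nabla_{\alpha_1X_1+\alpha_2X_2}\nabla_YZ=\alpha_1\nabla_{X_1}\nabla_YZ+\alpha_2\nabla_{X_2}\nabla_YZ$, and since $\nabla_{\alpha_1X_1+\alpha_2X_2}Y=\alpha_1\nabla_{X_1}Y+\alpha_2\nabla_{X_2}Y$ we also get $\nabla_{\nabla_{\alpha_1X_1+\alpha_2X_2}Y}Z=\alpha_1\nabla_{\nabla_{X_1}Y}Z+\alpha_2\nabla_{\nabla_{X_2}Y}Z$; subtracting yields the claim. For linearity in $Y$ I would expand $\nabla_X\nabla_{\alpha_1Y_1+\alpha_2Y_2}Z$ using first lower-slot linearity and then the Leibniz rule in the upper slot; besides the expected $\alpha_i\nabla_X\nabla_{Y_i}Z$ this produces the anomalous terms $(L_X\alpha_i)\nabla_{Y_i}Z$. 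The second piece $\nabla_{\nabla_X(\alpha_1Y_1+\alpha_2Y_2)}Z$ produces exactly the same anomalous terms, because $\nabla_X(\alpha_iY_i)=(L_X\alpha_i)Y_i+\alpha_i\nabla_XY_i$ and $\nabla$ is $C^\infty$-linear in the lower slot; hence they cancel in the difference, leaving $\alpha_1\nabla^2_{X,Y_1}Z+\alpha_2\nabla^2_{X,Y_2}Z$. This cancellation of the first-order (anomaly) terms is the only point that is not completely mechanical, and it is the same mechanism that makes the curvature operator tensorial.

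Claim (ii) requires no computation: $\nabla_W$ is additive and $\mathbb{R}$-homogeneous in its upper argument (the Leibniz anomaly $L_Wa_i$ vanishes for constants $a_i$), and therefore so is $Z\mapsto\nabla_X\nabla_YZ-\nabla_{\nabla_XY}Z$. For (iii) I would compute in three steps. First, $\nabla_Y(\beta Z)=(L_Y\beta)Z+\beta\nabla_YZ$; applying $\nabla_X$ and the Leibniz rule again gives
\begin{align*}
\nabla_X\nabla_Y(\beta Z)=(L_XL_Y\beta)Z+(L_Y\beta)\nabla_XZ+(L_X\beta)\nabla_YZ+\beta\nabla_X\nabla_YZ.
\end{align*}
Second, $\nabla_{\nabla_XY}(\beta Z)=(L_{\nabla_XY}\beta)Z+\beta\nabla_{\nabla_XY}Z$. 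Subtracting and collecting the coefficients of $Z$, $\nabla_XZ$, $\nabla_YZ$, the coefficient of $Z$ is $L_XL_Y\beta-L_{\nabla_XY}\beta$, which is by definition $\nabla^2_{X,Y}\beta$, while the rest assembles into $(L_X\beta)\nabla_YZ+(L_Y\beta)\nabla_XZ+\beta(\nabla_X\nabla_YZ-\nabla_{\nabla_XY}Z)=(L_X\beta)\nabla_YZ+(L_Y\beta)\nabla_XZ+\beta\nabla^2_{X,Y}Z$, exactly as stated. Since nothing beyond the connection axioms is used, there is no genuine obstacle; the only thing worth flagging is the bookkeeping step of recognizing the scalar second covariant derivative $\nabla^2_{X,Y}\beta=L_XL_Y\beta-L_{\nabla_XY}\beta$ as the natural coefficient of $Z$ in the product rule.
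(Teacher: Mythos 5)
Your proof is correct and is exactly the ``direct calculation'' the paper alludes to without writing out: each part follows from the definition $\nabla^2_{X,Y}Z=\nabla_X\nabla_YZ-\nabla_{\nabla_XY}Z$ together with the connection axioms, and you correctly identify the cancellation of the first-order terms $(L_X\alpha_i)\nabla_{Y_i}Z$ as the only nontrivial point in (i). As a side remark, your computation also shows that the second displayed line of (i) in the statement should end in $\alpha_2\nabla^2_{X,Y_2}Z$ rather than $\alpha_2\nabla^2_{X,Y_1}Z$, which is evidently a typo in the lemma as printed.
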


{The following lemma is crucial for the proof of Theorem~\ref{thm:MFC}.}
{
\begin{lemma}
	\label{lem:formula}
	For the system 
		\begin{align}
		\label{eq:nft1}
		\begin{split}
			\dot{{x}}^{1}&={y}^{1}\\
			\dot{{x}}^{i}&={y}^{i}
		\end{split}
		\begin{split}
			\dot{{y}}^{1}&={u}\\
			\dot{{y}}^{i}&=-{\Gamma}^{i}_{jk}{y}^j{y}^k +{x}^{i-1}, \quad 2 \leq i \leq n,
		\end{split}
	\end{align}
 we have for any $1\leq k,j \leq n $,
\begin{align}
	\label{eq:formula}
		&\nabla^2_{ad_{{e}}^{k-1}{g},ad_{{e}}^{j-1}{g}}{e}=(-1)^{j+k}\bigg( \fp{{\Gamma}^i_{j s}}{{x}^k} {e}^s + {\Gamma}^i_{jk+1} +{\Gamma}^i_{kj+1}	- {\Gamma}^{i-1}_{kj}\nonumber \\ &\qquad \quad+({\Gamma}^d_{js}{\Gamma}^i_{kd}- {\Gamma}^d_{kj}{\Gamma}^i_{ds}){e}^s 
\bigg) \fp{}{{x}^i}.
\end{align}
\end{lemma}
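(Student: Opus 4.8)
The plan is to compute $\nabla^2_{ad_e^{k-1}g,ad_e^{j-1}g}e$ directly from the definition \eqref{eq:cd2}, using the explicit form of $e$, $g$, and the iterated brackets $ad_e^{i-1}g$ for system \eqref{eq:nft1}. First I would record that for \eqref{eq:nft1} one has $g=\fp{}{x^1}$ and, by an easy induction, $ad_e^{i-1}g=(-1)^{i-1}\fp{}{x^i}$ for $1\le i\le n$ (this uses $e^i=x^{i-1}$ with the convention $x^0=0$, so that $\left[e,\fp{}{x^i}\right]=-\fp{e}{x^i}=-\fp{}{x^{i+1}}$ modulo the Christoffel-dependent part of $e$ — in fact $e$ has no quadratic part, so this is exact). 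Hence $\nabla^2_{ad_e^{k-1}g,ad_e^{j-1}g}e=(-1)^{j+k-2}\nabla^2_{\fp{}{x^k},\fp{}{x^j}}e=(-1)^{j+k}\nabla^2_{\partial_k,\partial_j}e$, which reduces the claim to computing $\nabla^2_{\partial_k,\partial_j}e$ in coordinates and matching it term by term with the bracketed expression in \eqref{eq:formula}.

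Next I would expand $\nabla^2_{\partial_k,\partial_j}e=\nabla_{\partial_k}\nabla_{\partial_j}e-\nabla_{\nabla_{\partial_k}\partial_j}e$ using the coordinate formula \eqref{eq:CD1}. For the first covariant derivative, $\nabla_{\partial_j}e=\left(\fp{e^i}{x^j}+\Gamma^i_{js}e^s\right)\fp{}{x^i}$; write this as $W=W^i\partial_i$ with $W^i=\fp{e^i}{x^j}+\Gamma^i_{js}e^s$. Note $\fp{e^i}{x^j}=\delta^i_{j+1}$ (Kronecker delta, with $\partial_n e$ contributing nothing beyond index $n$), which will eventually produce the $\Gamma^i_{kj+1}$ term after a further $\partial_k$-derivative of $\Gamma^i_{j+1\,s}e^s$... more precisely, $\nabla_{\partial_k}W=\left(\fp{W^i}{x^k}+\Gamma^i_{kd}W^d\right)\partial_i$. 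Differentiating $W^i$ in $x^k$ gives $\fp{\Gamma^i_{js}}{x^k}e^s+\Gamma^i_{js}\fp{e^s}{x^k}=\fp{\Gamma^i_{js}}{x^k}e^s+\Gamma^i_{j,k+1}$ (using $\fp{e^s}{x^k}=\delta^s_{k+1}$); and $\Gamma^i_{kd}W^d=\Gamma^i_{kd}(\delta^d_{j+1}+\Gamma^d_{js}e^s)=\Gamma^i_{k,j+1}+\Gamma^i_{kd}\Gamma^d_{js}e^s$. So $\nabla_{\partial_k}\nabla_{\partial_j}e=\left(\fp{\Gamma^i_{js}}{x^k}e^s+\Gamma^i_{j,k+1}+\Gamma^i_{k,j+1}+\Gamma^i_{kd}\Gamma^d_{js}e^s\right)\partial_i$, which already supplies four of the five groups of terms.

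For the correction term, $\nabla_{\partial_k}\partial_j=\Gamma^d_{kj}\partial_d$, so $\nabla_{\nabla_{\partial_k}\partial_j}e=\Gamma^d_{kj}\nabla_{\partial_d}e=\Gamma^d_{kj}\left(\fp{e^i}{x^d}+\Gamma^i_{ds}e^s\right)\partial_i=\Gamma^d_{kj}\left(\delta^i_{d+1}+\Gamma^i_{ds}e^s\right)\partial_i=\left(\Gamma^{i-1}_{kj}+\Gamma^d_{kj}\Gamma^i_{ds}e^s\right)\partial_i$ (interpreting $\Gamma^{i-1}_{kj}=0$ when $i=1$). Subtracting, $\nabla^2_{\partial_k,\partial_j}e=\left(\fp{\Gamma^i_{js}}{x^k}e^s+\Gamma^i_{j,k+1}+\Gamma^i_{k,j+1}+(\Gamma^d_{js}\Gamma^i_{kd}-\Gamma^d_{kj}\Gamma^i_{ds})e^s-\Gamma^{i-1}_{kj}\right)\partial_i$, and multiplying by $(-1)^{j+k}$ yields exactly \eqref{eq:formula}. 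The only real care needed — and the step I expect to be mildly error-prone rather than conceptually hard — is bookkeeping the index shifts coming from $\fp{e^s}{x^k}=\delta^s_{k+1}$ and the boundary conventions ($x^0=0$, $\Gamma^0_{\bullet}$ absent, $\Gamma^n_{\bullet,n+1}=0$), making sure the $e$-derivative terms land on $\Gamma^i_{j,k+1}$ and $\Gamma^{i-1}_{kj}$ with the correct subscripts; everything else is a mechanical application of \eqref{eq:CD1} and \eqref{eq:cd2}.
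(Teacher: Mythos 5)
Your proposal is correct and follows essentially the same route as the paper: reduce to $(-1)^{j+k}\nabla^2_{\partial/\partial x^k,\partial/\partial x^j}e$ using $ad_e^{i-1}g=(-1)^{i-1}\fp{}{x^i}$, then expand both terms of the second covariant derivative in coordinates via \eqref{eq:CD1}, using $\fp{e^d}{x^j}=\delta^d_{j+1}$ to produce the shifted-index terms $\Gamma^i_{jk+1}$, $\Gamma^i_{kj+1}$, and $\Gamma^{i-1}_{kj}$. The term-by-term bookkeeping, including the boundary conventions, matches the paper's computation exactly.
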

\begin{proof}
For system \eqref{eq:nft1} we calculate $	\nabla^2_{ad_{{e}}^{k-1}{g},ad_{{e}}^{j-1}{g}}{e}$
${=(-1)^{j+k}\nabla^2_{\fp{}{{x}^k},\fp{}{{x}^{j}}}{e}}= \nabla_{\fp{}{{x}^k}}\nabla_{\fp{}{{x}^{j}}}{e}-\nabla_{\nabla_{\fp{}{{x}^k}}\fp{}{{x}^{j}}}{e}$,\\
where
$
\nabla_{\fp{}{{x}^{j}}}{e}=\left(\fp{{e}^d}{{x}^{j}}+{\Gamma}^d_{js}{e}^s \right)\fp{}{{x}^d}
$, and 	
\begin{align*}
	&\nabla_{\fp{}{{x}^k}}\left(\nabla_{\fp{}{{x}^{j}}}{e} \right)= \nabla_{\fp{}{{x}^k}}\left( \fp{{e}^d}{{x}^{j}}\right) \fp{}{{x}^d}+
	\nabla_{\fp{}{{x}^k}}\left({\Gamma}^d_{js}{e}^s \right)\fp{}{{x}^d}\\
	&=\fp{{e}^d}{{x}^{j}}\nabla_{\fp{}{{x}^k}}\fp{}{{x}^d}+L_{\fp{}{{x}^k}}\left( \fp{{e}^d}{{x}^{j}}\right) \fp{}{{x}^d}+ \left( {\Gamma}^d_{js}{e}^s\right) \nabla_{\fp{}{{x}^k}}\fp{}{{x}^d}\\
	&+\left( L_{\fp{}{{x}^k}}\left( {\Gamma}^d_{js}\right){e}^s+ L_{\fp{}{{x}^k}}\left( {e}^s\right){\Gamma}^d_{js} \right)\fp{}{{x}^d} \\
	&= \fp{{e}^d}{{x}^{j}}{\Gamma}^i_{kd} \fp{}{{x}^i} + {\Gamma}^d_{j s} {e}^s {\Gamma}^i_{kd}\fp{}{{x}^i} + \left( \fp{{\Gamma}^i_{j s}}{{x}^k} {e}^s+ \fp{{e}^s}{{x}^k}{\Gamma}^i_{js}\right) \fp{}{{x}^i}\\
	&=\left( \fp{{\Gamma}^i_{js}}{{x}^k} {e}^s + {\Gamma}^i_{jk+1} +{\Gamma}^i_{kj+1} + {\Gamma}^d_{js}{\Gamma}^i_{kd} {e}^s\right) \fp{}{{x}^i},
\end{align*}
since $\fp{e^d}{x^j}=1$, if $d=j+1$, and zero otherwise, and thus $\fp{e^d}{x^j}\Gamma^i_{kd}=\Gamma^i_{kj+1}$ (analogously for the other derivatives).
Now, using $\nabla_{\fp{}{{x}^k}}\fp{}{{x}^{j}}={\Gamma}^d_{kj}\fp{}{{x}^d}$, we calculate
\begin{align*}
	&\nabla_{\nabla_{\fp{}{{x}^k}}\fp{}{{x}^{j}}}{e}=\nabla_{{\Gamma}^d_{kj}\fp{}{{x}^d}}{e}={\Gamma}^d_{kj}\left(\fp{{e}^i}{{x}^d}+{\Gamma}^i_{ds}{e}^s \right) \fp{}{{x}^i}\\&=\left({\Gamma}^{i-1}_{kj}+{\Gamma}^d_{kj}{\Gamma}^i_{ds}{e}^s \right)\fp{}{{x}^i}, \qquad \text{ so we have}
\end{align*}
\begin{align*}
	\begin{split}
		&\nabla^2_{\fp{}{{x}^k},\fp{}{{x}^{j}}}{e}=\nabla_{\fp{}{{x}^k}}\left(\nabla_{\fp{}{{x}^{j}}}{e} \right)-\nabla_{\nabla_{\fp{}{{x}^k}}\fp{}{{x}^{j}}}{e}\\
		&=\bigg( \fp{{\Gamma}^i_{j s}}{{x}^k} {e}^s + {\Gamma}^i_{jk+1} +{\Gamma}^i_{kj+1}  - {\Gamma}^{i-1}_{kj}\\ &\qquad \quad+({\Gamma}^d_{js}{\Gamma}^i_{kd}- {\Gamma}^d_{kj}{\Gamma}^i_{ds}){e}^s 
		\bigg) \fp{}{{x}^i}.
	\end{split}
\end{align*}
which yields \eqref{eq:formula}.
\end{proof}
}

\end{document}